\newcommand{\R}{{\mathbb R}}
\newcommand{\beq}{\begin{equation}}
\newcommand{\eeq}{\end{equation}}
\newcommand{\ben}{\begin{eqnarray}}
\newcommand{\een}{\end{eqnarray}}
\newcommand{\beno}{\begin{eqnarray*}}
\newcommand{\eeno}{\end{eqnarray*}}
\newtheorem{thm}{Theorem}[section]
\newtheorem{lem}[thm]{Lemma}
\newtheorem{prop}[thm]{Proposition}
\newtheorem{coro}[thm]{Corollary}
\newtheorem{rmk}[thm]{Remark}
\begin{document}

\title[Regularity of flat segregated interfaces]
{Uniform Lipschitz regularity of flat segregated interfaces in a
singularly perturbed problem}
\author[K. Wang]{ Kelei Wang}
 \address{\noindent K. Wang-
 School of Mathematics and Statistics, Wuhan University\\
\& Computational Science Hubei Key Laboratory, Wuhan University,
Wuhan, 430072, China.}
\email{wangkelei@whu.edu.cn}

\begin{abstract}
For the singularly perturbed system
\[\Delta u_{i,\beta}=\beta u_{i,\beta}\sum_{j\neq i}u_{j,\beta}^2, 1\leq i\leq N,\]
we prove that flat interfaces are uniformly Lipschitz. As a
byproduct of the proof we also obtain the optimal lower bound near
the flat interfaces,
\[\sum_iu_{i,\beta}\geq c\beta^{-1/4}.\]
\end{abstract}

\keywords{Singularly perturbed equations; phase separation; uniform
regularity of interfaces.}

\subjclass{35B06, 35B08, 35B25, 35J91.}

\maketitle

\date{}

\section{Main result}

This note is intended as a remark on the recent paper of Soave and
Zilio \cite{SZ 2}. We study the flat segregated interfaces of the
following singularly perturbed elliptic system
\begin{equation}\label{equation scaled}
\Delta u_{i,\beta}=\beta u_{i,\beta}\sum_{j\neq i} u_{j,\beta}^2,
\quad 1\leq i\leq N.
\end{equation}

Assume $u_\beta$ is a sequence of positive solutions to this system
in $B_1(0)\subset\R^n$, satisfying
\[\sup_{B_1(0)}\sum_{i}u_{i,\beta}\leq1, \quad \forall \beta>0.\]
By \cite{SZ}, $u_{i,\beta}$ are uniformly bounded in
$\mbox{Lip}_{loc}(B_1(0))$. Hence we can assume $u_{i,\beta}$
converges to $u_i$ in $C_{loc}(B_1(0))$. (It also converges strongly
in $H^1_{loc}(B_1)$, see \cite{TT2011}.) Then $(u_i)$ satisfies the
segregated condition
\[u_iu_j\equiv 0, \quad \forall \ \  i\neq j.\]
It was proved in \cite{TT2011} (see also \cite{DWZ2011}) that the
free boundary $\cup_i\partial\{u_i>0\}$ has Hausdorff dimension
$n-1$ and it can be decomposed into two parts: $\mbox{Reg}(u_i)$ and
$\mbox{Sing}(u_i)$. $\mbox{Sing}(u_i)$ is a relatively closed subset
of $\cup_i\partial\{u_i>0\}$ of Hausdorff dimension at most $n-2$,
while for any $x\in\mbox{Reg}(u_i)$, there exists a ball $B_r(x)$
such that there are only two components of $(u_i)$ nonvanishing  in
this ball, say $u_1$ and $u_2$. Furthermore, $u_1-u_2$ is harmonic
and $\nabla(u_1-u_2)\neq0$ in this ball. Hence the free boundary in
this ball is exactly the nodal set of this harmonic function. In
\cite{SZ 2}, it was proved that in this ball non-dominating species
decay as follows:
\begin{equation}\label{exponential decay}
\sum_{j\neq 1,2}u_{j,\beta}\leq Ce^{-c\beta^c}.
\end{equation}

Without loss of generality and perhaps after taking a smaller $r$,
we can assume $x=0$ and $\{u_1-u_2=0\}\cap B_r(0)$ is represented by
the graph of a Lipschitz graph in the form $\{x_n=h(x^\prime)\}$,
for $x^\prime\in B_r^{n-1}(0)$.

Our main result is
\begin{thm}\label{main result}
The segregated interface $\{u_{1,\beta}=u_{2,\beta}\}\cap B_r(0)$ is
represented by the graph of a Lipschitz function
$x_n=h_\beta(x_1,\cdots, x_{n-1})$, with the Lipschitz constant of
$h_\beta$ uniformly bounded. Moreover, $h_\beta$ converges uniformly
to $h$ in $B_r^{n-1}(0)$.
\end{thm}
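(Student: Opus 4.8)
The plan is to combine the exponential decay \eqref{exponential decay} of the non-dominating species with a blow-up/compactness scheme anchored by the classification of one-dimensional entire solutions of the limiting profile system.

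\emph{Step 1 (reduction and setup).} By \eqref{exponential decay} and interior gradient estimates, $u_{j,\beta}$ and $\nabla u_{j,\beta}$ are $O(e^{-c\beta^c})$ on $\overline{B_{3r/4}}$ for $j\neq1,2$, so $w_\beta:=u_{1,\beta}-u_{2,\beta}$ satisfies $\Delta w_\beta=-\beta u_{1,\beta}u_{2,\beta}\,w_\beta+O(e^{-c\beta^c})$; we work, for all practical purposes, with the two-species system. Since $w:=u_1-u_2$ is harmonic with $\nabla w\neq0$ on $B_r$, after shrinking $r$ and rotating we may assume $\partial_n w\ge c_0>0$ on $\overline{B_r}$, $\{w=0\}\cap B_r=\{x_n=h(x')\}$ with $h$ smooth and $\mathrm{Lip}(h)\le L_0$, $u_1=w^+$, $u_2=w^-$, $w_\beta\to w$ in $C^0(\overline{B_{3r/4}})$, and $|w(x)|\ge c_0\,\mathrm{dist}(x,\{w=0\})$. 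Fix a target constant $L>L_0$ and fix $\rho<r$.

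\emph{Step 2 (the heart: a flatness/separation estimate).} I claim that for $\beta$ large any two points $x,y$ of $\{w_\beta=0\}\cap\overline{B_\rho}$ satisfy $|x_n-y_n|\le L|x'-y'|$, proved by contradiction. Suppose $\beta_k\to\infty$ and $x_k,y_k\in\{w_{\beta_k}=0\}\cap\overline{B_\rho}$ with $|(x_k)_n-(y_k)_n|>L|x_k'-y_k'|$; set $m_k:=(x_k+y_k)/2$, $2d_k:=|x_k-y_k|>0$, and pass to a subsequence. \textbf{Macroscopic regime} ($d_k\gg\beta_k^{-1/4}$): rescale $u_{i,\beta_k}$ around $m_k$ at scale $d_k$, dividing by $d_k$ if $d_k\to0$; the rescaled system has coupling $\beta_k d_k^4\to\infty$, so the limit is a segregated pair of at most linear growth whose difference is harmonic, hence affine by Liouville when $d_k\to 0$, and whose zero set is the rescaled limit of the graph $\{x_n=h(x')\}$ — which is $L_0$-Lipschitz (a hyperplane of slope $\nabla h(m_\infty')$ when $d_k\to0$, and $\{x_n=h(x')\}$ itself otherwise). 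This limit set must contain the rescaled limits of $x_k$ and $y_k$, two distinct unit-separated points violating the $L$-Lipschitz bound, contradicting $L>L_0$. \textbf{Microscopic regime} ($d_k\lesssim\beta_k^{-1/4}$): rescale around $m_k$ at the interface scale $\lambda_k\sim\beta_k^{-1/4}$ and amplitude $\mu_k:=\sup_{\{w_{\beta_k}=0\}\cap B_{r/4}}u_{1,\beta_k}$. The upper bound $\mu_k\le C\beta_k^{-1/4}$ follows from $\int_{B_{1/2}}\beta u_{1,\beta}u_{2,\beta}^2\le C$ (integrate the equation, using the Lipschitz bound), and the matching lower bound $\mu_k\ge c\beta_k^{-1/4}$ — equivalently the stated bound $\sum_i u_{i,\beta}\ge c\beta^{-1/4}$ near the interface — is obtained below; given both, the rescaled functions are uniformly Lipschitz and bounded on compact sets and converge to a nonconstant entire solution $U=(U_i)$ of $\Delta U_i=U_i\sum_{j\neq i}U_j^2$ on $\R^n$, of at most linear growth and with segregated (``flat'') structure at infinity. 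By the classification of such solutions (see \cite{SZ 2} and the references therein), $U$ depends on a single variable, so $\{U_1=U_2\}$ is a hyperplane whose normal is the direction of $\nabla w(m_\infty)$, hence again $L_0$-Lipschitz; and since the rescaled interfaces are uniformly $C^{2,\alpha}$ with uniformly non-vanishing gradient, $d_k/\lambda_k$ stays bounded below, so the rescaled limits of $x_k,y_k$ are again two distinct points of this hyperplane violating the $L$-Lipschitz bound — a contradiction. \emph{The lower bound} $\mu_k\ge c\beta_k^{-1/4}$: if $\mu_k/\beta_k^{-1/4}\to0$ along a subsequence, the rescaling at scale $\sim\beta_k^{-1/4}$ and amplitude $\mu_k$ has vanishing coupling, so its limit consists of nonnegative harmonic functions on $\R^n$ bounded on compact sets, hence constant by Liouville — contradicting the growth forced by $\partial_n w\ge c_0>0$; the bound propagates off the interface by the local uniform $C^{2,\alpha}$ smoothness of the rescaled solutions.

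\emph{Step 3 (conclusion).} Combining Step 2 with $w_\beta\to w$ in $C^0$ and $|w|\ge c_0\,\mathrm{dist}(\cdot,\{w=0\})$, the set $\{w_\beta=0\}\cap B_\rho$ lies within Hausdorff distance $o(1)$ of $\{x_n=h(x')\}$ and, at each of its points, inside the cone $\{|x_n-y_n|\le L|x'-y'|\}$. Hence for $\beta$ large the vertical segment through each admissible $x'$ meets $\{w_\beta=0\}$ (where $w_\beta$ changes sign) in exactly one point — two such points would contradict the cone bound with $x'=y'$ — giving $x_n=h_\beta(x')$ with $\mathrm{Lip}(h_\beta)\le L$, uniformly in $\beta$. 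Finally $\{h_\beta\}$ is equi-Lipschitz, hence precompact in $C^0$; any uniform subsequential limit $h_*$ is $L$-Lipschitz with $w(x',h_*(x'))=\lim_k w_{\beta_k}(x',h_{\beta_k}(x'))=0$, so $h_*\equiv h$, and therefore $h_\beta\to h$ uniformly. The main obstacle is Step 2: beyond routine compactness it requires the sharp lower bound $\mu_\beta\gtrsim\beta^{-1/4}$ (without which the microscopic blow-up degenerates) and the one-dimensional rigidity of the limiting entire configurations; note in particular that a direct maximum-principle proof of the natural monotonicity $\partial_e w_\beta>0$ along cone directions is blocked by the sign-indefinite zeroth-order potential of size $\sim\beta^{1/2}$ appearing in the linearization.
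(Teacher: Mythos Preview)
Your plan is a genuinely different route from the paper's: instead of an $L^2$ excess-decay (improvement of flatness) iteration anchored by Almgren's monotonicity formula, you try a single compactness-by-contradiction argument with a dichotomy between a macroscopic regime $d_k\gg\beta_k^{-1/4}$ and a microscopic regime $d_k\lesssim\beta_k^{-1/4}$. This is appealing, but as written it has a real gap at the \emph{intermediate} scales, and the same gap recurs in your lower-bound and microscopic steps.

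\medskip
\textbf{The gap.} In the macroscopic case you rescale $\tilde w_k(x)=d_k^{-1}w_{\beta_k}(m_k+d_k x)$ and pass to an affine limit $\tilde w=a\cdot x+b$. To get a contradiction you need the zero hyperplane $\{\tilde w=0\}$ to be $L_0$-Lipschitz in the $e_n$ direction, i.e.\ $a$ close to $e_n$. You assert that this zero set is the rescaled limit of $\{x_n=h(x')\}$; but the blow-up is of $w_{\beta_k}$, not of $w$. Relating the two requires $d_k\gg\|w_{\beta_k}-w\|_{L^\infty}$ (so that $d_k^{-1}|w_{\beta_k}-w|\to 0$), and nothing in your hypotheses gives a rate for $w_\beta\to w$ beyond ``$o(1)$''; indeed $w_\beta\not\to w$ in $C^1$. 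For the same reason you cannot even show the limit $\tilde w$ is nonzero: the nondegeneracy you want ($|\tilde w|\ge c$ at rescaled distance $1$) comes from $|w|\ge c_0\,\mathrm{dist}(\cdot,\{w=0\})$ and again needs $d_k\gg\|w_\beta-w\|_\infty$. So there is an uncontrolled range $\beta_k^{-1/4}\ll d_k\lesssim\|w_{\beta_k}-w\|_\infty$ where neither of your two regimes applies. The microscopic argument has the same problem: after the $\beta^{-1/4}$ blow-up you do get a one-dimensional entire solution by the classification, but its direction $\tilde e$ is read off from the behavior at large $R$ --- i.e.\ from the original solution at scales $\beta^{-1/4}\ll r\ll 1$ --- and you have no control there, so you cannot conclude $\tilde e$ is close to $e_n$. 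Your lower bound $\mu_k\gtrsim\beta_k^{-1/4}$ inherits the same issue: in the would-be harmonic blow-up at scale $\beta^{-1/4}$ and amplitude $\mu_k$, the rescaled Lipschitz constant is $\beta_k^{-1/4}/\mu_k\to\infty$, so compactness fails, and ``growth forced by $\partial_n w\ge c_0$'' again needs a rate on $w_\beta-w$ that you do not have.

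\medskip
\textbf{What the paper does instead.} The missing ingredient is a mechanism that propagates \emph{directional} smallness of the excess from scale $1$ down to scale $\sim\beta^{-1/4}$. The paper supplies this in two steps: first, Almgren's monotonicity (Proposition~\ref{Almgren monotonicity formula}) passes the initial frequency bound $N\le 1+2\sigma_0$ to every scale, and a compactness lemma (Lemma~\ref{lem excess small}) turns this into excess smallness at every scale $\ge K(\varepsilon)\beta^{-1/4}$; second, the improvement-of-flatness estimate (Theorem~\ref{thm decay estimate}) is iterated (Lemma~\ref{lem 2.7}) to make the reference vectors $e(r,x)$ stay within $\sigma$ of $e_n$ all the way down. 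Only then is the microscopic blow-up (Lemma~\ref{lem 2.9}) run, with the crucial hypothesis $r^{-n}\!\int_{B_r}|\nabla u_1-\nabla u_2-e|^2\le\delta$ for all $L<r<R$ available to pin down the direction of the one-dimensional limit. Your compactness scheme would go through if you first inserted this scale-by-scale control; without it the contradiction in Step~2 does not close.
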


Some corollaries follow from the proof of this theorem.
\begin{coro}\label{coro 1}
There exists a constant $c_1>0$  independent of $\beta$ such that
\begin{equation}\label{1.1}
|\nabla(u_{1,\beta}-u_{2,\beta})|\geq c_1, \quad \mbox{ in } B_r(0).
\end{equation}
\end{coro}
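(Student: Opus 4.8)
\emph{Strategy and reduction.} I would split $B_r(0)$ into the ``bulk'', away from the limiting interface $\{x_n=h(x')\}$, where $C^1$‑convergence of $u_{i,\beta}$ to $u_i$ suffices, and a thin transition layer of width $\sim\beta^{-1/4}$ around it, where the bound is obtained by a blow‑up at scale $\beta^{-1/4}$ together with the classification of entire solutions of \eqref{equation scaled}. Throughout I use the uniform Lipschitz bound of \cite{SZ} (for compactness), the exponential decay \eqref{exponential decay} (to discard the species $j\geq3$), and Theorem~\ref{main result} (to locate $\{u_{1,\beta}=u_{2,\beta}\}$). By \eqref{exponential decay}, $w_\beta:=u_{1,\beta}-u_{2,\beta}$ satisfies $\Delta w_\beta=-\beta u_{1,\beta}u_{2,\beta}w_\beta+O(\beta e^{-c\beta^c})$. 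Since $w=u_1-u_2$ is harmonic with $\nabla w\neq0$, the set $\{x_n=h(x')\}=\{w=0\}\cap B_r(0)$ is real‑analytic; after shrinking $r$ and rotating coordinates I may assume $\partial_n w\geq 2c_0>0$ on $B_r(0)$.

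\emph{Bulk estimate.} On a compact subset of $B_r(0)\setminus\{x_n=h(x')\}$ we lie in the interior of a positivity set $\{u_i>0\}$; there $\beta u_{i,\beta}\sum_{j\neq i}u_{j,\beta}^2\to0$ in $L^1_{loc}$ (indeed, by a comparison argument the competing densities are exponentially small), so elliptic estimates give $u_{i,\beta}\to u_i$ in $C^1_{loc}$, whence $|\nabla w_\beta|\to|\nabla w|\geq 2c_0$ uniformly there. Therefore, for each small $\delta>0$ and all large $\beta$ we have $|\nabla w_\beta|\geq c_0$ outside the $\delta$‑tube $N_\delta$ around $\{x_n=h(x')\}$, while $\{w_\beta=0\}\cap B_{r/2}(0)\subset N_\delta$ by Theorem~\ref{main result}.

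\emph{Layer estimate by blow‑up.} Suppose toward a contradiction that $|\nabla w_{\beta_k}(x_k)|\to0$ along some $\beta_k\to\infty$, $x_k\in B_{r/2}(0)$. Along a subsequence $x_k\to x_\ast\in\{x_n=h(x')\}$, and the quantitative form of the bulk estimate --- together with the non‑degeneracy of the harmonic limit $w$ --- shows that the crossover from bulk to interface occurs on a layer of width $\sim\beta^{-1/4}$ on which $\sum_i u_{i,\beta}\sim\beta^{-1/4}$, the sharp two‑sided estimate announced in the abstract; in particular $\mathrm{dist}(x_k,\{w_{\beta_k}=0\})=O(\beta_k^{-1/4})$. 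Rescale at the scale $\beta_k^{-1/4}$, the unique exponent compatible with the scale invariance of \eqref{equation scaled} and the uniform Lipschitz bound: put $\tilde u_i^k(y):=\beta_k^{1/4}u_{i,\beta_k}(x_k+\beta_k^{-1/4}y)$. Then $(\tilde u_i^k)$ solves \eqref{equation scaled} with $\beta=1$, is uniformly Lipschitz on $\R^n$, satisfies $\tilde u_j^k\to0$ for $j\geq3$, and $\sum_i\tilde u_i^k(0)=\beta_k^{1/4}\sum_i u_{i,\beta_k}(x_k)\geq c>0$ by the lower bound just recalled. Hence, up to a subsequence, $\tilde u_i^k\to U_i$ in $C^1_{loc}(\R^n)$, where $(U_1,U_2)$ is a non‑trivial, globally Lipschitz entire solution of $\Delta U_1=U_1U_2^2$, $\Delta U_2=U_2U_1^2$ with $U_1,U_2>0$ everywhere; since $\{x_n=h(x')\}$ is a smooth hypersurface, the blow‑up limit inherits a planar interface, hence is one‑dimensional by the classification of such entire solutions, $U_i=U_i(y_n)$. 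For these one‑dimensional two‑component profiles $W:=U_1-U_2$ is strictly increasing with $W'$ bounded below by a positive constant (a classical property of the profiles). In particular $|\nabla(U_1-U_2)(0)|=W'(0)>0$, contradicting $\nabla(U_1-U_2)(0)=\lim_k\nabla w_{\beta_k}(x_k)=0$. Therefore $|\nabla w_\beta|$ is bounded below on $B_{r/2}(0)$ uniformly for $\beta$ large; since $r$ was shrunk freely at the outset, this is the assertion.

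\emph{Main obstacle.} The crux is the blow‑up step: proving non‑degeneracy of the rescaled solutions --- equivalently, the sharp $\beta^{-1/4}$ two‑sided bound for $\sum_i u_{i,\beta}$ inside the layer --- and identifying the blow‑up limit via the classification of globally Lipschitz entire solutions of the two‑component system with a planar interface; the ensuing one‑dimensional ODE analysis is routine. Keeping the bad point $x_k$ under control relative to $\{x_n=h_{\beta_k}(x')\}$ is exactly where the uniform Lipschitz bound of Theorem~\ref{main result} is used.
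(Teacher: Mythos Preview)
Your overall plan---bulk region plus a blow-up in the transition layer---is the right picture, and the blow-up step is essentially the content of Lemma~\ref{lem 2.9}. However, there is a genuine gap at the reduction to the layer. Your bulk estimate is purely qualitative: for each fixed $\delta>0$ you get $|\nabla w_\beta|\geq c_0$ outside $N_\delta$ once $\beta\geq\beta_0(\delta)$, with no rate in $\delta$. From this you only deduce that a bad sequence $x_k$ approaches the limiting interface, not that $\mathrm{dist}(x_k,\{w_{\beta_k}=0\})=O(\beta_k^{-1/4})$. You patch this by invoking the sharp lower bound $\sum_i u_{i,\beta}\geq c\beta^{-1/4}$ ``announced in the abstract'', but that is Corollary~\ref{coro 2}, which in the paper is proved by the same machinery as Corollary~\ref{coro 1}; citing it here is circular. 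Even granting it, Corollary~\ref{coro 2} does not locate $x_k$ within $O(\beta_k^{-1/4})$ of $\{w_{\beta_k}=0\}$. And if $\beta_k^{1/4}\mathrm{dist}(x_k,\{w_{\beta_k}=0\})\to\infty$, your blow-up at scale $\beta_k^{-1/4}$ degenerates to a constant pair $(c,0)$, a legitimate entire solution with $\nabla(U_1-U_2)\equiv0$---so no contradiction arises.

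What the paper supplies, and what your argument is missing, is quantitative control at \emph{all} intermediate scales. For every $y_0\in\{u_{1,\beta}=u_{2,\beta}\}$ and every $r\in(K_1\beta^{-1/4},\theta)$, Lemma~\ref{lem 2.7} (obtained by iterating the improvement of flatness, Theorem~\ref{thm decay estimate}) produces a vector $e(r,y_0)$ with $|e(r,y_0)|\geq1/4$ and small tilt-excess $r^{-n}\int_{B_r(y_0)}|\nabla w_\beta-e(r,y_0)|^2$. Given any $x_0$, set $\rho=\mathrm{dist}(x_0,\{w_\beta=0\})$, attained at $y_0$. If $\rho\gtrsim\beta^{-1/4}$, rescale $B_\rho(y_0)$: in the half containing $x_0$ one component dominates, the other is exponentially small, and interior elliptic estimates turn the integral tilt bound at scale $\rho$ into a pointwise bound $|\nabla w_\beta(x_0)-e(\rho,y_0)|$ small, whence $|\nabla w_\beta(x_0)|\geq1/8$. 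If $\rho\lesssim\beta^{-1/4}$, the Morrey bounds are precisely hypothesis~\eqref{3.2} of Lemma~\ref{lem 2.9}, whose conclusion~\eqref{3.3} gives the pointwise bound directly. The intermediate-scale tilt estimate is thus not a side remark but the engine of the proof; your scheme needs it both to rule out the degenerate blow-up and to handle the case $\beta^{-1/4}\ll\rho\ll1$, which your qualitative bulk estimate does not cover.
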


\begin{coro}\label{coro 2}
There exists a constant $c_2>0$ independent of $\beta$ such that,
\[u_{1,\beta}+u_{2,\beta}\geq c_2\beta^{-1/4}, \quad \mbox{ in } B_r(0).\]
\end{coro}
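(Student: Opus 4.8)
The plan is to establish the bound first on the interface $\Gamma_\beta:=\{u_{1,\beta}=u_{2,\beta}\}\cap B_r(0)$ and then to propagate it. Write $w_\beta:=u_{1,\beta}-u_{2,\beta}$. By Theorem~\ref{main result}, $\Gamma_\beta=\{w_\beta=0\}\cap B_r(0)$ is a graph $x_n=h_\beta(x')$ whose Lipschitz constant is bounded by a constant $\Lambda$ independent of $\beta$, and by Corollary~\ref{coro 1}, $|\nabla w_\beta|\ge c_1$ in $B_r(0)$; being local, these facts may be assumed to hold, with the same constants, on a slightly larger concentric ball. The propagation rests on one observation: from any $x$ off $\Gamma_\beta$, the unit-speed integral curve of $\mp\nabla w_\beta/|\nabla w_\beta|$ (sign chosen so that $w_\beta$ decreases in absolute value) makes $|w_\beta|$ drop at rate $\ge c_1$ and can meet $\{w_\beta=0\}$ only on $\Gamma_\beta$, whence $|w_\beta(x)|\ge c_1\,\mathrm{dist}\big(x,\Gamma_\beta\cup\partial B_r(0)\big)$. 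Thus for $\beta$ large, at $x\in B_r(0)$ with $\mathrm{dist}(x,\Gamma_\beta)\ge\beta^{-1/4}$ one has $u_{1,\beta}(x)+u_{2,\beta}(x)\ge|w_\beta(x)|\ge c_1\beta^{-1/4}$, while at $x$ with $\mathrm{dist}(x,\Gamma_\beta)<\beta^{-1/4}$ one chooses $x_0\in\Gamma_\beta$ with $|x-x_0|<\beta^{-1/4}$ and invokes the uniform Lipschitz bound of \cite{SZ} (constant $L$) to get $u_{1,\beta}(x)+u_{2,\beta}(x)\ge 2u_{1,\beta}(x_0)-2L\beta^{-1/4}$ — or better, splitting at $\varepsilon\beta^{-1/4}$ with $\varepsilon$ small, so that the loss is absorbable whatever the size of the interface constant. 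Hence it suffices to find $c>0$ independent of $\beta$ with
\beq\label{interface estimate}
u_{1,\beta}(x_0)\ \ge\ c\,\beta^{-1/4}\qquad\text{for all }x_0\in\Gamma_\beta
\eeq
(for $\beta$ large); and we may assume $m_\beta:=u_{1,\beta}(x_0)<\beta^{-1/4}$, since otherwise there is nothing to prove.

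To prove \eqref{interface estimate}, fix $x_0\in\Gamma_\beta$, WLOG with $\{w_\beta>0\}$ the side lying above the graph near $x_0$ (otherwise swap $u_1$ and $u_2$), and set $\rho:=\beta^{-1/4}$, so $B_\rho(x_0)\subset B_1(0)$ for $\beta$ large. On $B_\rho(x_0)$ the Lipschitz bound gives $u_{2,\beta}\le m_\beta+L\rho<(1+L)\rho=:K\rho$, hence $u_{2,\beta}^2<K^2\beta^{-1/2}$; combining this with the exponential decay \eqref{exponential decay} of the remaining components, for $\beta$ large
\beq
\Delta u_{1,\beta}=\beta u_{1,\beta}\Big(u_{2,\beta}^2+\sum_{j\neq1,2}u_{j,\beta}^2\Big)\ \le\ \mu^2 u_{1,\beta}\quad\text{in }B_\rho(x_0),\qquad \mu^2:=2K^2\beta^{1/2}.
\eeq
The choice $\rho=\beta^{-1/4}$ is precisely what makes $\mu\rho=\sqrt2\,K$ a fixed number, so the Helmholtz-type operator $\Delta-\mu^2$ produces no exponential loss at this scale. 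Let $\phi_\beta$ solve $\Delta\phi_\beta=\mu^2\phi_\beta$ in $B_\rho(x_0)$ with $\phi_\beta=u_{1,\beta}$ on $\partial B_\rho(x_0)$; then $\phi_\beta\ge0$, and since $u_{1,\beta}$ is a supersolution of $\Delta-\mu^2$ with the same boundary data, the comparison principle gives $u_{1,\beta}\ge\phi_\beta$ in $B_\rho(x_0)$. As $x_0$ is the centre of the ball, the Poisson kernel of $\Delta-\mu^2$ is constant over $\partial B_\rho(x_0)$, so
\beq
m_\beta=u_{1,\beta}(x_0)\ \ge\ \phi_\beta(x_0)\ =\ \psi(0)\,\frac{1}{|\partial B_\rho(x_0)|}\int_{\partial B_\rho(x_0)}u_{1,\beta}\,d\sigma ,
\eeq
where $\psi$ is the radial solution of $\Delta\psi=\mu^2\psi$ with $\psi\equiv1$ on $\partial B_\rho(x_0)$; since $\psi(0)$ depends only on $n$ and $\mu\rho$, it is bounded below by a constant $c_3(n,L)>0$.

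It remains to bound the spherical average of $u_{1,\beta}$ below by a fixed multiple of $\rho$. Let $\Sigma:=\{x\in\partial B_\rho(x_0):(x-x_0)\cdot e_n\ge(1-\tau)\rho\}$ with $\tau=\tau(\Lambda)\in(0,1)$ so small that $(x-x_0)\cdot e_n-\Lambda|(x-x_0)'|\ge\rho/2$ for $x\in\Sigma$; then $|\Sigma|\ge\theta_0\rho^{n-1}$ for some $\theta_0(n,\Lambda)>0$ and, since $x_0\in\Gamma_\beta$ and $h_\beta$ is $\Lambda$-Lipschitz, elementary geometry of Lipschitz graphs shows that every $x\in\Sigma$ lies strictly above the graph — so $w_\beta(x)>0$ — at distance $\mathrm{dist}(x,\Gamma_\beta)\ge\kappa\rho$ for some $\kappa(\Lambda)>0$. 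The gradient-flow observation of the first paragraph then yields $w_\beta(x)\ge c_1\kappa\rho$ for $x\in\Sigma$ and $\beta$ large, and as $u_{1,\beta}\ge\tfrac12(w_\beta)^+$ pointwise,
\beq
\frac{1}{|\partial B_\rho(x_0)|}\int_{\partial B_\rho(x_0)}u_{1,\beta}\,d\sigma\ \ge\ \frac{\tfrac12 c_1\kappa\rho\,|\Sigma|}{|\partial B_\rho(x_0)|}\ \ge\ c_4\,\rho ,\qquad c_4=c_4(n,\Lambda,c_1)>0.
\eeq
Combined with the previous display this gives $m_\beta\ge c_3c_4\,\beta^{-1/4}$, which is \eqref{interface estimate}; with the first paragraph this proves Corollary~\ref{coro 2}.

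The single substantive idea — and the source of the exponent $1/4$ — is the choice of scale $\rho\sim\beta^{-1/4}$. On $B_\rho(x_0)$ the recessive species satisfies $u_{2,\beta}\lesssim\rho$ by the uniform Lipschitz bound, so $u_{1,\beta}$ is governed there by $\Delta-\beta u_{2,\beta}^2\approx\Delta-\beta\rho^2$, and the comparison principle loses a factor $\sim\exp(-\sqrt{\beta}\,\rho^2)$; this is balanced against the linear lower bound $\sim\rho$ for the boundary average of $u_{1,\beta}$ coming from the non-degeneracy $|\nabla w_\beta|\ge c_1$, and $\rho\exp(-\sqrt{\beta}\,\rho^2)$ is largest at $\rho\sim\beta^{-1/4}$. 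Everything else — the comparison principle for $\Delta-\mu^2$, the geometry of the uniformly Lipschitz graph $\Gamma_\beta$, and \eqref{exponential decay} — is routine; the only point requiring a little care is keeping the constant in \eqref{interface estimate} uniform in $\beta$ and absorbing the fixed-factor losses in the propagation.
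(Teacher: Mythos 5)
Your argument is correct, but it takes a genuinely different route from the paper in the crucial near-interface step. The paper disposes of Corollary~\ref{coro 2} in one line (``similar'' to Corollary~\ref{coro 1}): for a point at distance $\rho$ from $\{u_{1,\beta}=u_{2,\beta}\}$ one distinguishes $\rho\geq K_3\beta^{-1/4}$, where the Morrey-type bounds of Lemma~\ref{lem 2.7} and the nondegeneracy $|\nabla(u_{1,\beta}-u_{2,\beta})|\geq c_1$ give $u_{1,\beta}+u_{2,\beta}\geq |u_{1,\beta}-u_{2,\beta}|\gtrsim\rho$, from $\rho\leq K_3\beta^{-1/4}$, where one rescales by $\beta^{-1/4}$ and invokes the compactness/Liouville argument of Lemma~\ref{lem 2.9}: the rescaled solution is $C^2_{loc}$-close to the one-dimensional profile $(g_1(\tilde e\cdot x),g_2(\tilde e\cdot x))$, whose sum is bounded below, yielding $u_{1,\beta}+u_{2,\beta}\geq c\beta^{-1/4}$ near the interface. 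Your far-field propagation via the gradient flow of $u_{1,\beta}-u_{2,\beta}$ is essentially the paper's first case, but for the bound \emph{on} the interface you replace the blow-up/classification step by a direct quantitative barrier: at scale $\rho=\beta^{-1/4}$ the recessive component is $O(\rho)$ by the uniform Lipschitz bound, so $u_{1,\beta}$ is a supersolution of $\Delta-\mu^2$ with $\mu\rho$ a fixed constant, and the constant Poisson kernel at the center plus the cap estimate (Lipschitz graph geometry and $u_{1,\beta}\geq(u_{1,\beta}-u_{2,\beta})^+$) gives $u_{1,\beta}(x_0)\geq c\beta^{-1/4}$. This buys an explicit constant and a transparent explanation of the exponent $1/4$ (the balance $\rho\,e^{-c\sqrt{\beta}\rho^2}$), at the price of invoking Theorem~\ref{main result} and Corollary~\ref{coro 1} as black boxes, whereas the paper's route is shorter given that Lemma~\ref{lem 2.9} and the one-dimensional classification of \cite{W} are already in place and it additionally identifies the precise profile near the interface. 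Two small points of bookkeeping in your write-up, both routine: the gradient-flow inequality should be stated with the distance to the full zero set in a slightly larger concentric ball (a curve starting in $B_r$ may reach $\{u_{1,\beta}=u_{2,\beta}\}$ at a point just outside $B_r$), and the split at $\varepsilon\beta^{-1/4}$ with $2L\varepsilon$ below the interface constant is indeed needed to keep all constants uniform; you flag both yourself.
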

This improves the lower bound estimate in \cite[Theorem 1.6]{SZ 2}
to the optimal one. Corollary \ref{coro 1} is also optimal, in the
sense that there is no further uniform regularity of $\nabla
u_{1,\beta}-\nabla u_{2,\beta}$. For example,
$u_{1,\beta}-u_{2,\beta}$ does not converge to the limit in $C^1$,
see \cite[Proposition 1.16]{SZ 2}.

The argument in this paper is similar to the proof for the
regularity of flat interfaces in the Allen-Cahn equation presented
in the second part of \cite{W3}. The main technical tool is the
improvement of flatness estimate in \cite{W2}. In \cite{W2}, this
estimate is only stated for entire solutions. However,  thanks to
the local uniform Lipschitz estimate in \cite{SZ}, now we can show
that it also holds for local solutions. Several new estimates from
\cite{SZ 2}, especially the exponential decay of non-dominating
species \eqref{exponential decay}, also allows us to treat systems
with more than two equations.

It is natural to conjecture that flat interfaces are also uniformly
bounded in $C^{k,\alpha}$ for any $k\geq1$ and $\alpha\in(0,1)$.
However, this is out of the reach of arguments in this note, which
does not even imply any uniform $C^{1,\alpha}$ regularity. (In the
Allen-Cahn equation, the uniform $C^{1,\alpha}$ regularity is only
achieved by combining this argument with the result in \cite{C-C
3}.)

\section{Proof of main results}

After restricting to a small ball, by a suitable translation and
some scalings, we are in the following setting:
\begin{enumerate}
\item[(1)] $u_\beta$ is a sequence of solutions to \eqref{equation
scaled} in $B_2(0)$;

\item[(2)] $u_\beta$ converges to $u:=(u_1,u_2,0,\cdots,0)$
uniformly in $B_2(0)$, and also strongly in $H^1_{loc}(B_2(0))$;

\item[(3)] $u_{1,\beta}(0)=u_{2,\beta}(0)$;

\item[(4)] there exists a small universal constant $\sigma_0$ (to be determined later) such that, for any $x\in B_1(0)\cap\{u_1-u_2=0\}$,
\begin{equation}\label{close to 1d}
\frac{\int_{B_1(x)}|\nabla u_1|^2+|\nabla u_2|^2}{\int_{\partial
B_1(x)}u_1^2+u_2^2}\leq 1+\sigma_0.
\end{equation}
\end{enumerate}

By multiplying $u_\beta$ and $u$ by a positive constant, we may
assume
\begin{equation}\label{L2 normalized}
\int_{\partial B_1(0)} u_1^2+u_2^2=\int_{\partial B_1}x_n^2.
\end{equation}

Because $u_1(0)-u_2(0)=0$ and $u_1-u_2$ is harmonic, by Almgren
monotonicity formula for harmonic functions, we always have
\[\frac{\int_{B_1(x)}|\nabla u_1|^2+|\nabla u_2|^2}{\int_{\partial
B_1(x)}u_1^2+u_2^2} \geq 1, \quad \forall \ x\in B_1(0)\cap
\{u_1-u_2=0\},\]
 and \eqref{close to 1d} implies the existence of a
unit vector $e$, which we assume to be the $n$-th coordinate
direction, such that
\begin{equation}\label{close to 1d 2}
\sup_{B_1(0)}\left(|u_1-u_2-x_n|+|\nabla(u_1-u_2-x_n)|\right)\leq
c(\sigma_0)<1/16,
\end{equation}
provided $\sigma_0$ has been chosen small enough.

Some remarks are in order.
\begin{rmk}
In the following it is always assumed that \eqref{exponential decay}
holds in $B_2(0)$. Then because $u_{i,\beta}$ is nonnegative and
subharmonic, we get
\[\sum_{i\neq 1,2}\int_{B_{3/2}(0)}|\nabla u_{i,\beta}|^2\leq Ce^{-c\beta^c}.\]

The following rescaling will be used many times in the proof:
\begin{equation}\label{rescaling}
u^\lambda_{i,\beta}(x)=\lambda^{-1}u_{i,\beta}(\lambda x), \quad
\lambda>0.
\end{equation}
 Once
$\lambda>\beta^{-1/4}$, \eqref{exponential decay} still holds for
$u_\beta^\lambda:=(u_{i,\beta}^\lambda)$, perhaps with a larger $C$
and a smaller $c$ (but still independent of $\beta\to+\infty$).
\end{rmk}

\begin{rmk}\label{uniform Lip bound}
Throughout this section, we assume the Lipschitz constant of
$u_{i,\beta}$ is bounded by a constant independent of $\beta$. Since
all of the rescalings used in this paper are in the form
\eqref{rescaling}, any rescaling of $u_\beta$ has the same Lipschitz
bound.
\end{rmk}

Let us first recall some known results. The first one is the Almgren
monotonicity formula, see for example \cite[Proposition 5.2]{BTWW}.
\begin{prop}\label{Almgren monotonicity formula}
For any $x\in B_2(0)$,
\[N(r;x,u_\beta):=\frac{r\int_{B_r(x)}\sum_i|\nabla u_{i,\beta}|^2+\beta\sum_{i<j}u_{i,\beta}^2u_{j,\beta}^2}
{\int_{\partial B_r(x)}\sum_i u_{i,\beta}^2}\] is increasing in
$r\in(0,2-|x|)$.
\end{prop}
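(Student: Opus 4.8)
This is the standard Almgren--Garofalo--Lin frequency argument; I would carry it out for the present competitive system by keeping track of the nonnegative contribution of the interaction term. Fix $x\in B_2(0)$; after a translation we may assume $x=0$, and since solutions of this semilinear system are smooth by elliptic bootstrap, all the quantities below are differentiable in $r$. Write $W:=\sum_{i<j}u_{i,\beta}^2u_{j,\beta}^2$ and set
\[
H(r):=\int_{\partial B_r}\sum_i u_{i,\beta}^2,\qquad D(r):=\int_{B_r}\Big(\sum_i|\nabla u_{i,\beta}|^2+\beta W\Big),\qquad \mu(r):=\beta\int_{B_r}W\ge 0,
\]
so that $N(r;0,u_\beta)=rD(r)/H(r)$. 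By positivity of the $u_{i,\beta}$ we have $H(r)>0$, and $D(r)>0$ for $r\in(0,2)$ (were a denominator to vanish the statement would be trivial); hence it suffices to prove $(\log N)'\ge 0$.

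First I would record two first-variation identities. Differentiating $H$ by scaling out the sphere, and separately testing the equation $\Delta u_{i,\beta}=\beta u_{i,\beta}\sum_{j\neq i}u_{j,\beta}^2$ against $u_{i,\beta}$ on $B_r$ (summing over $i$ and using $\sum_i u_{i,\beta}^2\sum_{j\neq i}u_{j,\beta}^2=2W$), one gets
\[
H'(r)=\frac{n-1}{r}H(r)+2\int_{\partial B_r}\sum_i u_{i,\beta}\,\partial_\nu u_{i,\beta},\qquad \int_{\partial B_r}\sum_i u_{i,\beta}\,\partial_\nu u_{i,\beta}=D(r)+\mu(r).
\]
For $D$ I would use the Rellich--Pohozaev identity: multiply each equation by $x\cdot\nabla u_{i,\beta}$, sum over $i$, integrate by parts on $B_r$, and use $\sum_{i\neq j}u_{i,\beta}(x\cdot\nabla u_{i,\beta})u_{j,\beta}^2=\tfrac12\,x\cdot\nabla W$ together with $\int_{B_r}\sum_i|\nabla u_{i,\beta}|^2=D(r)-\mu(r)$. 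Collecting boundary and bulk terms yields
\[
D'(r)=2\int_{\partial B_r}\sum_i(\partial_\nu u_{i,\beta})^2+\frac{n-2}{r}D(r)+\frac{2\mu(r)}{r}.
\]

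Finally I would assemble $(\log N)'=\tfrac1r+\tfrac{D'}{D}-\tfrac{H'}{H}$: the terms $\tfrac1r+\tfrac{n-2}{r}-\tfrac{n-1}{r}$ cancel, leaving
\[
(\log N)'(r)=2\left[\frac{\int_{\partial B_r}\sum_i(\partial_\nu u_{i,\beta})^2}{D(r)}+\frac{\mu(r)}{rD(r)}-\frac{D(r)+\mu(r)}{H(r)}\right].
\]
Cauchy--Schwarz (pointwise in $\R^N$, then in $L^2(\partial B_r)$) gives $\big(\int_{\partial B_r}\sum_i u_{i,\beta}\partial_\nu u_{i,\beta}\big)^2\le H(r)\int_{\partial B_r}\sum_i(\partial_\nu u_{i,\beta})^2$, i.e. $\int_{\partial B_r}\sum_i(\partial_\nu u_{i,\beta})^2\ge (D(r)+\mu(r))^2/H(r)$; substituting this and using $(D+\mu)^2-(D+\mu)D=(D+\mu)\mu$,
\[
(\log N)'(r)\ \ge\ \frac{2}{H(r)D(r)}\Big[(D(r)+\mu(r))\mu(r)+\frac{\mu(r)H(r)}{r}\Big]\ \ge\ 0,
\]
since every factor is nonnegative. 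Thus $N(r;0,u_\beta)$ is nondecreasing on $(0,2)$, and by translation $N(r;x,u_\beta)$ is nondecreasing on $(0,2-|x|)$. The only real obstacle is bookkeeping: assigning the correct coefficient to the interaction term in each identity (the factor $2$ coming from $\sum_i u_i^2\sum_{j\neq i}u_j^2=2W$) and checking that the residual interaction terms carry a favorable sign so that the Cauchy--Schwarz step closes; there is no genuine analytic difficulty beyond the standard smoothness of solutions. (In fact $\mu(r)>0$ since the positive species genuinely interact, so $N$ is strictly increasing; only monotonicity is needed.)
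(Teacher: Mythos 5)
Your proof is correct: the first-variation identities, the Pohozaev computation giving $D'(r)=2\int_{\partial B_r}\sum_i(\partial_\nu u_{i,\beta})^2+\frac{n-2}{r}D(r)+\frac{2\mu(r)}{r}$, and the Cauchy--Schwarz step all check out, and the interaction term indeed enters with a favorable sign. The paper gives no proof of this proposition, simply citing \cite[Proposition 5.2]{BTWW}, and your argument is exactly the standard Almgren--Garofalo--Lin frequency computation that the cited reference carries out, so there is nothing further to compare.
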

By the strong convergence of $u_\beta$ in $H^1_{loc}(B_2(0))$ and
the bound \eqref{close to 1d}, we can assume that, for all $\beta$
large and $x\in\{u_{1,\beta}=u_{2,\beta}\}\cap B_1(0)$,
$N(1;x,u_\beta)\leq 1+2\sigma_0$. Then by this proposition,
\begin{equation}\label{bound on Almgren}
N(r;x,u_\beta)\leq 1+2\sigma_0, \quad \forall \  r\in(0,1).
\end{equation}

The next one is \cite[Lemma 6.1]{W} or \cite[Theorem 1.1]{SZ 2}.
\begin{lem}\label{lem 2.4}
For any $x\in\{u_{1,\beta}=u_{2,\beta}\}\cap B_{3/2}(0)$,
\[u_{1,\beta}(x)=u_{2,\beta}(x)\leq C\beta^{-1/4}.\]
\end{lem}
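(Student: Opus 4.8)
The plan is to argue by contradiction, using the $\beta$-independent interior Lipschitz bound of \cite{SZ} together with the intrinsic scaling of \eqref{equation scaled}. Suppose the conclusion fails: there are $\beta_k\to\infty$ and points $x_k\in\{u_{1,\beta_k}=u_{2,\beta_k}\}\cap B_{3/2}(0)$ such that $m_k:=u_{1,\beta_k}(x_k)=u_{2,\beta_k}(x_k)>0$ satisfies $m_k\beta_k^{1/4}\to\infty$. Let $\rho_k:=(m_k\sqrt{\beta_k})^{-1}$ — the length scale on which a solution of \eqref{equation scaled} of size $m_k$ should vary — and rescale
\[
v_{i,k}(y):=m_k^{-1}\,u_{i,\beta_k}(x_k+\rho_k y).
\]
A short computation, using $\rho_k^2\beta_k m_k^2=1$, shows that $v_{i,k}>0$ solves the model system $\Delta v_{i,k}=v_{i,k}\sum_{j\ne i}v_{j,k}^2$ (i.e. \eqref{equation scaled} with $\beta=1$), and by the choice of $m_k$, $v_{1,k}(0)=v_{2,k}(0)=1$.

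Two observations then force a contradiction. First, since both $\beta_k^{1/4}$ and $m_k\beta_k^{1/4}$ tend to $\infty$, so does $m_k\sqrt{\beta_k}=(m_k\beta_k^{1/4})\beta_k^{1/4}$; hence $\rho_k\to0$, so for $k$ large $x_k+\rho_k\overline{B_1(0)}\subset B_{7/4}(0)$, on which Remark \ref{uniform Lip bound} (the interior Lipschitz estimate of \cite{SZ}) gives $|\nabla u_{i,\beta_k}|\le L$ with $L$ independent of $\beta_k$. Since $\nabla v_{i,k}(y)=m_k^{-1}\rho_k(\nabla u_{i,\beta_k})(x_k+\rho_k y)$, it follows that
\[
\sup_{B_1(0)}|\nabla v_{i,k}|\le L\,m_k^{-1}\rho_k=L\,(m_k\beta_k^{1/4})^{-2}=:\varepsilon_k\longrightarrow0 .
\]
Second, combining this with $v_{i,k}(0)=1$ yields $\sup_{B_1(0)}|v_{i,k}-1|\le\varepsilon_k\to0$ for $i=1,2$, so that $v_{1,k},v_{2,k}\ge\tfrac12$ on $B_1(0)$ once $k$ is large.

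Consequently $\Delta v_{1,k}=v_{1,k}\sum_{j\ne1}v_{j,k}^2\ge v_{1,k}v_{2,k}^2\ge\tfrac18$ on $B_1(0)$ for all large $k$, so the divergence theorem gives
\[
\tfrac18\,|B_1|\le\int_{B_1(0)}\Delta v_{1,k}=\int_{\partial B_1(0)}\partial_\nu v_{1,k}\le|\partial B_1(0)|\,\sup_{\partial B_1(0)}|\nabla v_{1,k}|\le|\partial B_1(0)|\,\varepsilon_k\longrightarrow0 ,
\]
which is impossible. This contradiction proves the estimate, the constant $C$ depending only on $n$ and the Lipschitz constant (for small $\beta$ the bound is trivial since the $u_{i,\beta}$ are uniformly bounded). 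The only real subtlety here is the legitimacy of the rescaling — one must know that $v_{i,k}$ still solves the model system with a fixed coupling constant on balls that do not shrink, and carries a $\beta$-independent Lipschitz bound there; both are guaranteed by the scale invariance of \eqref{equation scaled} and by the interior Lipschitz estimates of \cite{SZ}, which are uniform in $\beta$. The rest is the soft remark that, on a scale where a Lipschitz function of size $m$ must be nearly constant, its Laplacian $\beta u_1u_2^2\sim\beta m^3$ is far too large for a bounded, nearly constant function — which is exactly the claimed bound $m\le C\beta^{-1/4}$.
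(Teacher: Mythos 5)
Your proof is correct; every step checks out: with $\rho_k=(m_k\sqrt{\beta_k})^{-1}$ one indeed has $\rho_k^2\beta_k m_k^2=1$, so $v_{i,k}$ solves the system with coupling $1$, the chain rule gives $\sup_{B_1}|\nabla v_{i,k}|\le L\,m_k^{-1}\rho_k=L(m_k\beta_k^{1/4})^{-2}\to0$, hence $v_{1,k},v_{2,k}\ge 1/2$ on $B_1$, and the divergence theorem then contradicts $\Delta v_{1,k}\ge v_{1,k}v_{2,k}^2\ge 1/8$. You also handle the two hygiene points correctly: $\beta_k\to\infty$ along the contradiction sequence (forced by the uniform bound on $u_\beta$, so small $\beta$ is trivial), and $x_k+\rho_k\overline{B_1}$ stays in a compact subset of $B_2(0)$ where the $\beta$-independent Lipschitz bound of Remark \ref{uniform Lip bound} applies; the extra components $j\neq1,2$ cause no trouble since they are nonnegative and only increase $\Delta v_{1,k}$. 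The paper itself gives no argument for this lemma — it simply cites \cite[Lemma 6.1]{W} and \cite[Theorem 1.1]{SZ 2}, where the bound is obtained via the product estimate $u_1u_2\le C$ for the $\beta=1$ system (equivalently $u_{1,\beta}u_{2,\beta}\le C\beta^{-1/2}$), which on $\{u_{1,\beta}=u_{2,\beta}\}$ yields the $\beta^{-1/4}$ bound. Your blow-up at the natural scale $(m\sqrt{\beta})^{-1}$ is the same mechanism (a Lipschitz function of size $m$ is nearly constant on that scale, which is incompatible with $\Delta u_1=\beta u_1\sum_{j\neq1}u_j^2\sim\beta m^3$), but it proves exactly the statement needed, directly and self-containedly, without passing through the product bound or the cited papers — a worthwhile simplification, at the cost of not giving the stronger off-diagonal information ($u_1u_2\le C$ away from $\{u_1=u_2\}$) that the references provide and that this paper later uses in the proof of Theorem \ref{thm decay estimate}.
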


The main technical result we will use is the following decay
estimate, first proved in \cite{W2}.
\begin{thm}\label{thm decay estimate}
There exist four universal constants $\theta\in(0,1/2)$,
$\varepsilon_0$ small and $K_0, C$ large such that, if $u_\beta$ is
a solution of \eqref{equation scaled} in $B_1(0)$, satisfying
\begin{equation}\label{exponential decay 2}
\sum_{i\neq
1,2}\left[\sup_{B_1(0)}u_{i,\beta}^2+\int_{B_1(0)}|\nabla
u_{i,\beta}|^2\right]\leq Ce^{-c\beta^c},
\end{equation}
\begin{equation}\label{small condition}
\varepsilon^2:=\int_{B_1(0)}|\nabla u_{1,\beta}-\nabla
u_{2,\beta}-e|^2\leq \varepsilon_0^2,
\end{equation}
where $e$ is a vector satisfying $|e|\geq 1/4$, and
$\beta^{1/8}\varepsilon^2\geq K_0$, then there exists another vector
$\tilde{e}$, with
\[|\tilde{e}-e|\leq C(n)\varepsilon,\]
such that
\[\theta^{-n}\int_{B_\theta(0)}|\nabla u_{1,\beta}-\nabla u_{2,\beta}-\tilde{e}|^2\leq \frac{1}{2}\varepsilon^2.\]
\end{thm}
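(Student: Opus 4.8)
The plan is to prove Theorem~\ref{thm decay estimate} by a compactness-contradiction argument, exactly in the spirit of the improvement-of-flatness scheme, upgrading the entire-solution version from \cite{W2} to the local setting by exploiting the uniform Lipschitz bound of \cite{SZ} (Remark~\ref{uniform Lip bound}). First I would normalize: set $v_\beta := u_{1,\beta}-u_{2,\beta}$, which is harmonic up to an error controlled by $\beta u_{1,\beta}u_{2,\beta}(u_{1,\beta}-u_{2,\beta})$ plus contributions of the non-dominating species, all of which are tiny by \eqref{exponential decay 2} and Lemma~\ref{lem 2.4}. Suppose for contradiction that the conclusion fails for some sequence $\beta_k\to\infty$ and solutions $u_{\beta_k}$ with $\varepsilon_k\to 0$ (the case $\varepsilon_k\not\to 0$ is handled directly since then $\varepsilon_0$ can be taken small), $|e_k|\geq 1/4$, and $\beta_k^{1/8}\varepsilon_k^2\geq K_0$, but for which no $\tilde e$ with $|\tilde e-e_k|\leq C(n)\varepsilon_k$ achieves the decay. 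Consider the blow-up sequence
\[
w_k := \frac{v_{\beta_k} - v_{\beta_k}(0) - e_k\cdot x}{\varepsilon_k}.
\]
The goal is to show $w_k$ converges (in $H^1_{loc}$, say on $B_{1/2}(0)$) to a function $w_\infty$ that is harmonic in $B_{1/2}(0)$, and then use the standard interior estimate for harmonic functions — a quadratic Taylor expansion at the origin — to produce the improved vector $\tilde e$ at the limit, and finally transfer it back to finite $k$, contradicting the assumed failure.

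The key steps, in order, are as follows. Step~1: derive uniform bounds on $w_k$ — the normalization $\|\nabla w_k\|_{L^2(B_1)}\leq 1$ is built in, and using $v_{\beta_k}(0)=0$ (from condition~(3), $u_{1,\beta}(0)=u_{2,\beta}(0)$) plus Poincaré gives an $H^1(B_1)$ bound, so $w_k\rightharpoonup w_\infty$ weakly in $H^1(B_1)$ and strongly in $L^2$. Step~2: pass to the limit in the equation for $v_{\beta_k}$. We have $\Delta v_{\beta_k} = \beta_k u_{1,\beta_k}u_{2,\beta_k}(u_{1,\beta_k}-u_{2,\beta_k}) + (\text{non-dominating terms})$. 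The delicate point is the size of $\beta_k u_{1,\beta_k}u_{2,\beta_k}$ in, say, $L^1_{loc}$: one must show $\beta_k^{-1}\|\Delta v_{\beta_k}\|$-type quantities divided by $\varepsilon_k$ go to zero. Here the hypothesis $\beta_k^{1/8}\varepsilon_k^2\geq K_0$ is exactly what is needed: by Lemma~\ref{lem 2.4} on the interface $|v_{\beta_k}|\lesssim\beta_k^{-1/4}$ there, so the product $u_{1,\beta_k}u_{2,\beta_k}$ is supported on a thin layer (of width $\sim\beta_k^{-1/4}$ by the uniform Lipschitz bound on $v_{\beta_k}$, since $\nabla v_{\beta_k}\approx e_k$ with $|e_k|\geq 1/4$) and is itself $O(\beta_k^{-1/2})$; integrating, $\beta_k\int u_{1,\beta_k}u_{2,\beta_k}|v_{\beta_k}| \lesssim \beta_k\cdot\beta_k^{-1/2}\cdot\beta_k^{-1/4}\cdot\beta_k^{-1/4}=O(1)$ per unit volume, and dividing by $\varepsilon_k$ one still needs a gain — this is where a finer layer analysis (the clean-up / blow-up near the interface from \cite{W2}, or a direct ODE comparison across the layer) must be invoked to show the forcing term is $o(\varepsilon_k)$ in the appropriate negative norm, using $\varepsilon_k^2\gg \beta_k^{-1/8}K_0^{-1}\cdot K_0 = \beta_k^{-1/8}$... more precisely $\varepsilon_k\gtrsim K_0^{1/2}\beta_k^{-1/16}$, which dominates powers like $\beta_k^{-1/8}$ coming out of the layer estimate. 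Step~3: conclude $\Delta w_\infty=0$ weakly in $B_{1/2}(0)$, hence $w_\infty$ is smooth; by the normalization and $w_\infty(0)=0$ and $\nabla w_\infty(0)=0$ (the last because the linear part $e_k\cdot x$ was already subtracted and the freedom in choosing $e_k$ optimally is absorbed — one normalizes so that $\int_{B_1}\nabla w_k = o(1)$), Schauder/interior estimates give $|\nabla w_\infty(x)-\nabla w_\infty(0)|\leq C(n)|x|\|\nabla w_\infty\|_{L^2(B_{1/2})}\leq C(n)|x|$. Step~4: choose $\theta$ small (universal) so that $\theta^{-n}\int_{B_\theta}|\nabla w_\infty - \nabla w_\infty(0)|^2 \leq C(n)^2\theta^2 \leq 1/4$; then $\tilde e_k := e_k + \varepsilon_k(\nabla w_\infty(0) + o(1))$ satisfies $|\tilde e_k - e_k|\leq C(n)\varepsilon_k$ and, by strong $H^1(B_\theta)$ convergence of $w_k\to w_\infty$ (upgraded from weak via the equation and Step~2), $\theta^{-n}\int_{B_\theta}|\nabla v_{\beta_k}-\tilde e_k|^2 = \varepsilon_k^2\,\theta^{-n}\int_{B_\theta}|\nabla w_k - \nabla w_\infty(0) + o(1)|^2 \leq \tfrac12\varepsilon_k^2$ for $k$ large, the desired contradiction.

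The main obstacle is Step~2, the control of the singular forcing $\beta_k u_{1,\beta_k}u_{2,\beta_k}(u_{1,\beta_k}-u_{2,\beta_k})$ after dividing by $\varepsilon_k$. Unlike the Allen--Cahn case where the analogue is a bounded nonlinearity, here the factor $\beta$ blows up, and one only controls it through the combination of (i) the $\beta^{-1/4}$ height bound on the interface from Lemma~\ref{lem 2.4}, (ii) the uniform Lipschitz bound (Remark~\ref{uniform Lip bound}), which converts the height bound into a width bound for the transition layer, and (iii) the quantitative lower bound $\beta_k^{1/8}\varepsilon_k^2\geq K_0$, which guarantees that the blow-up scale $\varepsilon_k$ is large compared to the natural layer scale, so that from the viewpoint of $w_k$ the layer collapses to the nodal hyperplane $\{x_n=0\}$ and contributes nothing in the limit. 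Making this precise requires a careful one-dimensional blow-up across the interface (tracking $\beta^{1/4}v_\beta$ in the normal variable, which converges to an explicit profile as in \cite{SZ}), and this is the technical heart of the argument, essentially imported and localized from \cite{W2}. The transfer of the limiting improved vector back to finite $\beta$ (Step~4) additionally requires strong, not merely weak, $H^1_{loc}$ convergence of $w_k$; this follows once the forcing is shown to be small in $H^{-1}$, since then $\Delta w_k$ is a convergent sequence in $H^{-1}(B_{1/2})$ and elliptic regularity upgrades weak to strong convergence of the gradients on slightly smaller balls.
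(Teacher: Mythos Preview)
Your overall architecture --- blow up $w_k=(v_{\beta_k}-v_{\beta_k}(0)-e_k\cdot x)/\varepsilon_k$, extract a harmonic limit by compactness, read off the improved vector from interior estimates --- is exactly the harmonic-approximation scheme of \cite[Theorem~2.2]{W2}, which the paper invokes verbatim. So at the structural level your proposal and the paper coincide.

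The genuine gap is Step~2, and you have not closed it. Your own layer heuristic (width $\sim\beta^{-1/4}$, $u_1u_2=O(\beta^{-1/2})$, $|v|=O(\beta^{-1/4})$) yields only
\[
\int_{B_1}\beta_k\,u_{1,\beta_k}u_{2,\beta_k}\,|v_{\beta_k}|\;=\;O(1),
\]
so $\Delta w_k=-\varepsilon_k^{-1}\beta_k u_{1}u_{2}v$ does \emph{not} go to zero in any dual norm by this count, and your sentence ``a finer layer analysis \ldots\ must be invoked'' is precisely the missing content. The paper's proof consists almost entirely of supplying that content: it does not do a one-dimensional profile blow-up as you suggest, but instead proves two quantitative local replacements for \cite[Lemmas~3.3--3.4]{W2} (which rely on entire-solution structure and are unavailable here), namely
\[
\int_{B_{3/4}}\beta u_{1,\beta}u_{2,\beta}^3+\beta u_{2,\beta}u_{1,\beta}^3\leq C\beta^{-1/8}
\qquad\text{and}\qquad
\int_{B_{3/4}}|\nabla u_{1,\beta}|\,|\nabla u_{2,\beta}|\leq C\beta^{-1/8}.
\]
The first is obtained by a layer-cake argument using $\Delta u_{1,\beta}\leq Ce^{-c\beta^{1/4}t}$ on $\{u_{2,\beta}>t\}$; the second by rescaling to $\beta=1$, splitting according to $\{u_1\lessgtr T\}$, and combining Harnack, the subharmonicity inequality $\Delta|\nabla u_2|\geq u_1^2|\nabla u_2|-2u_1u_2|\nabla u_1|$, and the bound $\int u_1^2u_2^2\leq CR^{n-1}$. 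It is exactly the exponent $1/8$ in these two estimates that dictates the hypothesis $\beta^{1/8}\varepsilon^2\geq K_0$ (in \cite{W2} the entire-solution lemmas give $\beta^{-1/4}$ and one only needs $\varepsilon^2\gg\beta^{-1/4}$). Without these estimates your compactness step does not go through, and your proposed normal-variable blow-up is a different mechanism whose output --- a qualitative profile convergence --- does not by itself yield a \emph{rate} of the form $\beta^{-c}$ on the interaction terms.

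One minor point: the theorem as stated does not assume $u_{1,\beta}(0)=u_{2,\beta}(0)$, so you should not appeal to condition~(3); simply subtract the mean of $w_k$ before applying Poincar\'e.
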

\begin{proof}
The proof is similar to \cite[Theorem 2.2]{W2} with only three
different points:
\begin{enumerate}
\item[(i)] Now the system \eqref{equation scaled} could contain
more than two equations. However, with the hypothesis
\eqref{exponential decay 2} the effect of $u_{i,\beta}$ ($i\neq
1,2$) is exponentially small, hence it does not affect the final
conclusion.

\item[(ii)] We do not claim \cite[Lemma 3.4]{W2}. This estimate is used in \cite[Eq. (5.1)]{W2}. Instead, we only
provide a weaker estimate
\begin{equation}\label{4.1}
\int_{B_{3/4}(0)}\beta u_{1,\beta}u_{2,\beta}^3+\beta
u_{2,\beta}u_{1,\beta}^3\leq C\beta^{-1/8}.
\end{equation}
This is the reason we replace the condition
$\varepsilon^2\gg\beta^{-1/4}$ in \cite[Theorem 2.2]{W2} by a more
restrictive one $\varepsilon^2\gg\beta^{-1/8}$.

Note that $\beta u_{1,\beta}u_{2,\beta}^3\leq u_{2,\beta}\Delta
u_{1,\beta}$. Thus
\begin{eqnarray*}
\int_{B_{3/4}(0)}\beta
u_{1,\beta}u_{2,\beta}^3&\leq&\int_0^{+\infty}\left(\int_{B_{3/4}(0)\cap\{u_{2,\beta}>t\}}\Delta
u_{1,\beta}\right)dt\\
&\leq&\int_0^{L\beta^{-1/4}\log\beta}\int_{B_{3/4}(0)}\Delta
u_{1,\beta}+\int_{L\beta^{-1/4}\log\beta}^{+\infty}Ce^{-c\beta^{1/4}t}\\
&\leq&C\beta^{-1/4}\log\beta,
\end{eqnarray*}
where $L$ is a large constant (fixed to be independent of $\beta>0$)
and we have used the fact that $\Delta u_{1,\beta}\leq
Ce^{-c\beta^{1/4}t}$ in $\{u_{2,\beta}>t\}$. \eqref{4.1} follows
from this estimate if $\beta$ is large enough.

\item[(iii)] It is also not known whether \cite[Lemma 3.3]{W2} holds. However, in \cite{W2} this estimate
is only used to derive \cite[Eq. (4.6)]{W2}, which will be replaced
by the following weaker estimate
\begin{equation}\label{4.6}
\int_{B_{3/4}(0)}|\nabla u_{1,\beta}||\nabla u_{2,\beta}|\leq
C\beta^{-1/8}.
\end{equation}

For simplicity, we will take a rescaling as in \eqref{rescaling} so
that $\beta=1$ in the equation and the domain is $B_R(0)$ where
$R=\beta^{1/4}$. Solutions are denoted by $(u_i)$.

Choose a $T$ large so that $u_1u_2<T^2$ in $B_R(0)$ (see \cite[Lemma
6.1]{W}). By this choice $\{u_1>T\}$ and $\{u_2>T\}$ are disjoint.

For any $x\in\{u_1<T,u_2<T\}$, by the Lipschitz continuity of $u_1$
and $u_2$, $u_1\leq T+C$ and $u_2\leq T+C$ in $B_1(x)$. Then by
standard gradient estimates and Harnack inequality,
\[|\nabla u_i(x)|\leq C\sup_{B_1(x)}u_i\leq Cu_i(x), \quad\forall \  i=1,2.\]
Thus by the Cauchy inequality,
\begin{eqnarray}\label{4.3}
\int_{B_{R-1}(0)\cap\{u_1<T,u_2<T\}}|\nabla u_1||\nabla u_2|&\leq&
C\int_{B_{R-1}(0)\cap\{u_1<T,u_2<T\}} u_1u_2 \nonumber\\
&\leq&CR^{\frac{n}{2}}\left(\int_{B_{R-1}(0)}u_1^2u_2^2\right)^{1/2}\\
&\leq& CR^{n-1/2},\nonumber
\end{eqnarray}
where we have used \cite[Lemma 6.4]{W}, which implies
\[\int_{B_{R-1}(0)}u_1^2u_2^2\leq CR^{n-1}.\]

For $x\in\{u_1\geq T\}$, by noting that
\[\Delta |\nabla u_2|\geq u_1^2|\nabla u_2|-2u_1u_2|\nabla u_1|,\]
we get
\begin{equation}\label{4.5}
|\nabla u_2(x)|\leq C\sup_{B_{1/2}(x)}\left(u_1u_2\right).
\end{equation}
 Because $u_2$ is
subharmonic,
\begin{equation}\label{4.4}
\sup_{B_{1/2}(x)}u_2\leq C\int_{B_1(x)}u_2.
\end{equation}
Since $u_1(x)\geq T$, by the Lipschitz bound on $u_1$, if we have
chosen $T$ sufficiently large,
\begin{equation}\label{4.7}
\frac{1}{2}\sup_{B_1(x)}u_1\leq u_1(x)\leq \sup_{B_1(x)}u_1.
\end{equation}
Combining \eqref{4.5}-\eqref{4.7} with the Lipschitz continuity of
$u_1$, we get
\[|\nabla u_1(x)||\nabla u_2(x)|\leq C\int_{B_1(x)}u_1u_2, \quad \forall \ x\in\{u_1>T\}\cap B_{3R/4}(0).\]
Integrating this on $\{u_1>T\}\cap B_{3R/4}(0)$ and using the Fubini
theorem and the Cauchy inequality, we obtain
\begin{eqnarray}\label{4.8}
\int_{\{u_1>T\}\cap B_{3R/4}(0)}|\nabla u_1||\nabla u_2|&\leq&
C\int_{B_1(0)}\int_{\{u_1>T\}\cap B_{3R/4}(0)}u_1(x+y)u_2(x+y)dxdy \nonumber\\
&\leq& C\int_{B_{\frac{3R}{4}+1}(0)}u_1u_2\\
&\leq& CR^{n-1/2}.\nonumber
\end{eqnarray}
A similar estimate holds in $\{u_2>T\}\cap B_{3R/4}(0)$. Combining
\eqref{4.3} with these we get \eqref{4.6}.
\end{enumerate}
\end{proof}

The next lemma can be used to show that the condition \eqref{small
condition} is always satisfied for $(u_{i,\beta}^\lambda)$, provided
$\lambda\gg \beta^{-1/4}$.
\begin{lem}\label{lem excess small}
For any $\varepsilon>0$, there exist two constants $K(\varepsilon)$
and $\delta(\varepsilon)$ so that the following holds. Suppose
$u_\beta$ is a solution of \eqref{equation scaled} in $B_2(0)$, with
$\beta\geq K(\varepsilon)$, satisfying
$u_{1,\beta}(0)=u_{2,\beta}(0)$, \eqref{exponential decay 2} and
\begin{equation}\label{Almgren bound}
\frac{2\int_{B_2(0)}\sum_{i}|\nabla u_{i,\beta}|^2+\sum_{i<j}\beta
u_{i,\beta}^2u_{j,\beta}^2} {\int_{\partial B_2(0)}\sum_i
u_{i,\beta}^2}\leq 1+\delta(\varepsilon),
\end{equation}
 then there exists a vector
$e$ such that
\begin{equation}\label{excess small}
\int_{B_1(0)}|\nabla u_{1,\beta}-\nabla
u_{2,\beta}-e|^2\leq\varepsilon^2.
\end{equation}
\end{lem}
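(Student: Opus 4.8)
The plan is to argue by contradiction and compactness, using the strong $H^1$ convergence guaranteed by the uniform Lipschitz bound. Suppose the claim fails for some $\varepsilon>0$: then for each $k$ there is a solution $u_{\beta_k}$ with $\beta_k\to+\infty$ satisfying $u_{1,\beta_k}(0)=u_{2,\beta_k}(0)$, the decay bound \eqref{exponential decay 2}, and the Almgren near-minimality \eqref{Almgren bound} with $\delta=1/k$, yet \eqref{excess small} fails for every vector $e$. By Remark \ref{uniform Lip bound} the sequence $(u_{i,\beta_k})$ is uniformly Lipschitz on $B_2(0)$, so after passing to a subsequence it converges in $C_{loc}(B_2(0))$ and strongly in $H^1_{loc}(B_2(0))$ (as recorded in the introduction, via \cite{SZ},\cite{TT2011}) to a limit $(v_i)$. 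The decay \eqref{exponential decay 2} forces $v_i\equiv 0$ for $i\neq 1,2$, and passing to the limit in the equation together with the segregation $v_1v_2\equiv 0$ shows $v_1-v_2$ is harmonic in $B_2(0)$. Moreover the numerator $\beta\sum_{i<j}u_{i,\beta_k}^2u_{j,\beta_k}^2\to 0$ in $L^1$ (by \cite{TT2011} or directly by \eqref{bound on Almgren}-type bounds), so \eqref{Almgren bound} passes to the limit to give Almgren frequency $N(2;0,v_1-v_2)\le 1$; combined with $v_1(0)-v_2(0)=0$ and the Almgren monotonicity formula for harmonic functions, the frequency is exactly $1$, hence $v_1-v_2$ is a nonzero linear function, say $v_1-v_2=\langle e_\ast,x\rangle$ for some $e_\ast\neq 0$.

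Now I would exploit the strong $H^1$ convergence to get the quantitative conclusion. Since $u_{1,\beta_k}-u_{2,\beta_k}\to v_1-v_2$ strongly in $H^1(B_1(0))$, we have $\int_{B_1(0)}|\nabla u_{1,\beta_k}-\nabla u_{2,\beta_k}-\nabla(v_1-v_2)|^2\to 0$. Choosing $e=e_\ast=\nabla(v_1-v_2)$, the left-hand side of \eqref{excess small} tends to $0$, so for $k$ large it is $\le\varepsilon^2$ — contradicting the assumption that \eqref{excess small} fails for every $e$. This contradiction proves the lemma, with $K(\varepsilon)$ and $\delta(\varepsilon)$ extracted (non-constructively) from the compactness argument.

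The one delicate point, and the main obstacle, is justifying that the concentrated interaction term $\beta_k\sum_{i<j}u_{i,\beta_k}^2u_{j,\beta_k}^2$ vanishes in the limit inside the denominator-normalized quotient \eqref{Almgren bound}, so that the limiting quotient is genuinely the Dirichlet-over-boundary-$L^2$ ratio of the harmonic function $v_1-v_2$. This is exactly the content of the convergence theory for these systems (e.g. \cite{TT2011},\cite{SZ}): the strong $H^1_{loc}$ convergence already encodes that $\int_{B_{3/2}(0)}\beta_k\sum_{i<j}u_{i,\beta_k}^2u_{j,\beta_k}^2\to 0$, since otherwise the energy would not converge to that of the (segregated, interaction-free) limit. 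One should also check the limit is not identically zero — that the normalization is not lost — but this follows because $\int_{\partial B_2(0)}\sum_i u_{i,\beta_k}^2$ is bounded below away from $0$: indeed if it degenerated, then by the Lipschitz bound and $u_{i,\beta_k}(0)=u_{i,\beta_k}(0)$ the solutions would converge to zero, and rescaling (dividing by $(\int_{\partial B_2}\sum u_i^2)^{1/2}$) restores a nontrivial normalized limit to which the same frequency argument applies. A minor bookkeeping point is that \eqref{Almgren bound} is stated on $B_2(0)$ while \eqref{excess small} is on $B_1(0)$; the interior strong $H^1$ convergence on $B_1(0)$ is exactly what we use, and the one-unit gap gives the needed room for local convergence.
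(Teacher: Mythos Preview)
Your approach is essentially the one in the paper: argue by contradiction and compactness, pass to a segregated limit with $v_1-v_2$ harmonic, use the Almgren frequency bound $\le 1$ together with $v_1(0)-v_2(0)=0$ to force $v_1-v_2$ linear, and finish via strong $H^1$ convergence. Two small technical points are handled slightly differently in the paper. First, uniform Lipschitz alone does not give $C_{loc}$ compactness; the paper secures the missing pointwise bound by invoking Lemma~\ref{lem 2.4}, which gives $u_{1,\beta}(0)=u_{2,\beta}(0)\le C\beta^{-1/4}$, so your renormalization discussion can be replaced by a direct citation. Second, rather than passing \eqref{Almgren bound} to the limit at $r=2$ (where boundary convergence on $\partial B_2$ would be needed), the paper first uses the Almgren monotonicity formula (Proposition~\ref{Almgren monotonicity formula}) to transfer the bound to $r=1$ and then takes the limit there, where interior strong $H^1$ convergence suffices.
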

\begin{proof}
Assume by the contrary, there exists an $\varepsilon>0$, a sequence
of solutions $u_\beta$ with $\beta\to+\infty$, satisfying
$u_{1,\beta}(0)=u_{2,\beta}(0)$, \eqref{exponential decay 2} and
\begin{equation}\label{Almgren bound 2}
\limsup_{\beta\to+\infty}\frac{2\int_{B_2(0)}\sum_{i}|\nabla
u_{i,\beta}|^2+\sum_{i<j}\beta u_{i,\beta}^2u_{j,\beta}^2}
{\int_{\partial B_2(0)}\sum_i u_{i,\beta}^2}\leq 1,
\end{equation}
but for any vector $e$,
\begin{equation}\label{excess small 2}
\int_{B_1(0)}|\nabla u_{1,\beta}-\nabla
u_{2,\beta}-e|^2\geq\varepsilon^2.
\end{equation}

By our assumption, the Lipschitz constant of $u_{i,\beta}$ in
$B_{3/2}(0)$ are uniformly bounded in $\beta$. By Lemma \ref{lem
2.4},
\[u_{1,\beta}(0)=u_{2,\beta}(0)\leq C\beta^{-1/4}.\]
Hence $u_{1,\beta}$ and $u_{2,\beta}$ are also uniformly bounded in
$B_{3/2}(0)$. Assume it converges uniformly to $(u_1,u_2,0,\cdots)$
in $B_{3/2}(0)$. As before, $u_1u_2\equiv0$ and $u_1-u_2$ is a
harmonic function. Moreover, $(u_{i,\beta})$ also converges to
$(u_1,u_2,0,\cdots)$ in $H^1(B_1(0))$. Hence by Proposition
\ref{Almgren monotonicity formula} and \eqref{Almgren bound 2}, we
obtain
\[\frac{\int_{B_1(0)}\sum_{i}|\nabla
u_i|^2}{\int_{\partial B_1(0)}\sum_i u_i^2}\leq 1.\] Then by  the
characterization of linear functions using Almgren monotonicity
formula (noting that $u_1(0)-u_2(0)=0$), we get a vector $e$ such
that
\[u_1(x)-u_2(x)\equiv e\cdot x, \quad \mbox{in  } B_1(0).\]
By the strong convergence of $u_{i,\beta}$ in $H^1(B_1(0))$ again,
\[\lim_{\beta\to+\infty}\int_{B_1(0)}|\nabla u_{1,\beta}-\nabla u_1|^2
+|\nabla u_{2,\beta}-\nabla u_2|^2=0.\] This is a contradiction with
\eqref{excess small 2} and finishes the proof of this lemma.
\end{proof}

After these preliminaries now we prove
\begin{lem}\label{lem 2.7}
For any $\sigma>0$, there exist two universal constants
$K_1(\sigma), K_2$ ($K_2$ independent of $\sigma$) such that the
following holds. For any $x\in\{u_{1,\beta}=u_{2,\beta}\}\cap
B_1(0)$, there exists an $r_\beta(x)\in (K_1\beta^{-1/4},\theta)$
such that,
\begin{itemize}
\item for any $r>r_\beta(x)$, there exists a vector $e(r,x)$, with
$|e(r,x)|\geq 1/4$, such that
\begin{equation}\label{Morrey bound 1}
r^{-n}\int_{B_r(x)}|\nabla u_{1,\beta}-\nabla
u_{2,\beta}-e(r,x)|^2\leq Cr^\alpha\int_{B_{3/2}(0)}|\nabla
u_{1,\beta}-\nabla u_{2,\beta}-e_n|^2,
\end{equation}
 where $\alpha=\log
2/|\log\theta|$ and $\theta$ is as in Theorem \ref{thm decay
estimate};

\item for $r\in(K_1\beta^{-1/4},r_\beta(x))$, there exists a vector
$e(r,x)$, with $|e(r,x)|\geq 1/4$, such that
\begin{equation}\label{Morrey bound 2}
r^{-n}\int_{B_r(x)}|\nabla u_{1,\beta}-\nabla
u_{2,\beta}-e(r,x)|^2\leq K_2\beta^{-\frac{1}{8}}r^{-\frac{1}{2}}.
\end{equation}
\end{itemize}
Moreover, for any $r\in(K_1\beta^{-1/4},\theta)$,
\begin{equation}\label{difference with e_n}
|e(r,x)-e_n|\leq \sigma+ C\left(\int_{B_{3/2}(0)}|\nabla
u_{1,\beta}-\nabla u_{2,\beta}-e_n|^2\right)^{1/2}<1/2,
\end{equation}
for all $\beta$ large.
\end{lem}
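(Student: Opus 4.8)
The plan is to run a standard iteration of the improvement-of-flatness estimate (Theorem \ref{thm decay estimate}), with the scale-invariant quantity being the $L^2$-excess of $\nabla(u_{1,\beta}-u_{2,\beta})$ from a constant vector. Fix $x\in\{u_{1,\beta}=u_{2,\beta}\}\cap B_1(0)$ and write, for $r\in(0,3/2)$,
\[
E(r)=\inf_{e\in\R^n} r^{-n}\int_{B_r(x)}|\nabla u_{1,\beta}-\nabla u_{2,\beta}-e|^2 ,
\]
with $e(r,x)$ an (almost) minimizing vector. The starting datum is $E(3/2)\le C\int_{B_{3/2}(0)}|\nabla u_{1,\beta}-\nabla u_{2,\beta}-e_n|^2$, which by \eqref{close to 1d 2} and the $H^1$-convergence can be made smaller than $\varepsilon_0^2$ for all $\beta$ large (and after a preliminary covering/scaling step). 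Note $|e_n|=1\ge 1/4$; I will need to check along the iteration that $|e(r,x)|$ stays $\ge 1/4$, which follows from \eqref{difference with e_n} once that is established.

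First I would set up the dyadic iteration $r_k=\theta^k r_0$ starting from a radius $r_0$ comparable to a universal constant (so that, after rescaling $B_{r_0}(x)$ to $B_1$, Theorem \ref{thm decay estimate} applies to the rescaled solution, whose $\beta$-parameter becomes $\beta r_0^2$, still $\to+\infty$). Apply Theorem \ref{thm decay estimate} repeatedly: as long as the \emph{smallness} condition \eqref{small condition} holds and the \emph{size} condition $\beta^{1/8}E(r_k)\gtrsim K_0$ holds (in the correctly rescaled variables — this is where $\beta^{1/8}r^{-1/2}E$ appears), we get $E(r_{k+1})\le\frac12 E(r_k)$ together with $|e(r_{k+1},x)-e(r_k,x)|\le C\sqrt{E(r_k)}$. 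Summing the geometric series gives both \eqref{Morrey bound 1} with $\alpha=\log 2/|\log\theta|$ and the telescoped bound $|e(r,x)-e_n|\le \sigma + C\sqrt{E(3/2)}$, hence \eqref{difference with e_n}; choosing the initial excess small (which dictates $K_1(\sigma)$ and forces $\beta$ large) makes this $<1/2$, and in particular keeps $|e(r,x)|\ge 1/4$, closing the bootstrap. Define $r_\beta(x)$ to be the \emph{first} (largest) radius $r_k$ at which the size condition $\beta^{1/8} r_k^{-1/2} E(r_k)\ge K_2'$ fails, i.e. where the "strong" regime of Theorem \ref{thm decay estimate} is no longer available; above $r_\beta(x)$ the halving continues and yields \eqref{Morrey bound 1}.

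For radii $r\in(K_1\beta^{-1/4},r_\beta(x))$ the decay estimate is no longer in force, and here I would instead use the Caccioppoli-type interior estimates \eqref{4.1} and \eqref{4.6} from the proof of Theorem \ref{thm decay estimate}. Indeed, $v_\beta:=u_{1,\beta}-u_{2,\beta}$ satisfies $\Delta v_\beta=\beta u_{1,\beta}u_{2,\beta}^2-\beta u_{2,\beta}u_{1,\beta}^2$ (plus exponentially small terms from $i\ne 1,2$), so a standard comparison of $\nabla v_\beta$ with its average on $B_r(x)$, using elliptic estimates for $\Delta v_\beta$, gives
\[
E(r)\le C\, r^{-n}\!\int_{B_{2r}(x)}|\nabla v_\beta - \textstyle\fint \nabla v_\beta|^2 \le C\, r^{2-n}\!\int_{B_{2r}(x)}\big(\beta u_{1,\beta}u_{2,\beta}^3+\beta u_{2,\beta}u_{1,\beta}^3 + |\nabla u_{1,\beta}||\nabla u_{2,\beta}|\big)\le C\,r^{2-n}\cdot r^{n-1/2}\beta^{-1/8},
\]
where the last step is \eqref{4.1}--\eqref{4.6} applied on $B_{2r}(x)$ after the rescaling $y\mapsto (y-x)/r$ (so the exponent $\beta^{-1/8}$ scales to $(\beta r^{4})^{-1/8}r^{\,\cdot}$; tracking the powers of $r$ gives exactly $\beta^{-1/8}r^{-1/2}$). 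This yields \eqref{Morrey bound 2} with a universal $K_2$ independent of $\sigma$. The definition of $r_\beta(x)$ as the crossover radius ensures the two regimes match up, and the lower bound $r_\beta(x)>K_1\beta^{-1/4}$ is exactly the requirement that the size condition $\beta^{1/8}E\gtrsim K_0$ (equivalently $\beta r^4\gtrsim$ const after rescaling) can be satisfied at all, which fails precisely below scale $\beta^{-1/4}$.

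The main obstacle is bookkeeping the rescaling in Theorem \ref{thm decay estimate}: the theorem is stated on $B_1(0)$ with the hypothesis $\beta^{1/8}\varepsilon^2\ge K_0$, and at each step of the iteration I apply it on a ball $B_{r_k}(x)$, so I must rescale by $r_k$, which turns $\beta$ into $\beta r_k^2$ and $\varepsilon^2$ into $r_k^{-n}\int_{B_{r_k}}|\cdots|^2 = E(r_k)$ — up to the additional $r_k$-powers coming from the $e$-normalization $|e|\ge 1/4$, which is why the clean statement has $E(r)$ while the size threshold reads $\beta^{1/8}r^{-1/2}E$ in \eqref{Morrey bound 2}. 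Getting all these powers consistent, and checking that at every step the rescaled exponential-decay hypothesis \eqref{exponential decay 2} still holds (true once $r_k\beta^{1/4}\gtrsim 1$, as recorded in the first Remark of \S 2) and that $|e(r_k,x)|\ge 1/4$ stays valid so that the next application is legitimate, is the delicate part; everything else is the routine geometric-sum iteration.
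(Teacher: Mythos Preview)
Your overall strategy and the treatment of the first regime (iterate Theorem~\ref{thm decay estimate} on dyadic scales $r_k=\theta^k$, obtain geometric halving of $E_k$, telescope to get \eqref{Morrey bound 1} and \eqref{difference with e_n}) match the paper. The handling of the second regime, however, has a genuine gap.

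You propose to establish \eqref{Morrey bound 2} by a direct Caccioppoli-type bound on the $L^2$ oscillation of $\nabla v_\beta$ in terms of $r^{2-n}\int(\beta u_{1}u_{2}^3+\beta u_{2}u_{1}^3+|\nabla u_1||\nabla u_2|)$. That displayed inequality is not standard and is not justified: the term $|\nabla u_1||\nabla u_2|$ has no obvious provenance from the equation $\Delta v_\beta=\beta u_1u_2(u_2-u_1)+(\text{exp.\ small})$, and the passage from $L^1$ control of $\Delta v_\beta$ to $L^2$ oscillation of $\nabla v_\beta$ does not follow from elementary elliptic estimates. The paper avoids any such estimate. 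It observes instead that the set of indices $k$ for which the size hypothesis $E_k\ge K_2\beta^{-1/8}\theta^{-k/2}$ holds is \emph{downward closed} (if it holds at $k$, Claim~3 gives $E_{k-1}\ge 2E_k\ge K_2\beta^{-1/8}\theta^{-(k-1)/2}$), hence an initial segment $\{0,\dots,k_2\}$; for $k>k_2$ the bound \eqref{Morrey bound 2} is then literally the \emph{negation} of the size hypothesis, and nothing further is needed.

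For this downward-closure argument to reach all the way to scales $\sim\beta^{-1/4}$, one must know that the smallness hypothesis $E_{k-1}\le\varepsilon_0^2$ of Theorem~\ref{thm decay estimate} holds at \emph{every} such scale, including those in the second regime where halving no longer supplies it. This is exactly the role of Lemma~\ref{lem excess small}: the Almgren bound \eqref{bound on Almgren} persists at all radii by monotonicity, and the lemma converts it into $E_k\le\varepsilon_0^2$ whenever $\theta^k\gtrsim\beta^{-1/4}$ (Claim~2 in the paper's proof). Your proposal never invokes this lemma, and without it neither your route nor the paper's can be completed. Finally, note a rescaling slip: under \eqref{rescaling} with $\lambda=r_k$ one has $\beta\mapsto\beta r_k^{4}$, not $\beta r_k^{2}$; the rescaled size condition is $(\beta r_k^{4})^{1/8}E_k\ge K_0$, i.e.\ $E_k\ge K_0\beta^{-1/8}r_k^{-1/2}$, which is what matches \eqref{Morrey bound 2}.
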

\begin{proof}
Without loss of generality assume $x$ is the origin $0$. For each
$k\geq 0$, let
\[E_k:=\min_{e\in\R^n} \theta^{-kn}\int_{B_{\theta^k}(0)}|\nabla u_{1,\beta}-\nabla u_{2,\beta}-e|^2,\]
which can be assumed to be attained by a vector $e_k$.

By our hypothesis, in particular \eqref{close to 1d 2}, $E_0$ is
very small for all $\beta$ large. Moreover, $e_0$ is close to the
$n$-th direction. In the following we will show that $|e_k|\geq 1/2$
up to scales $\theta^k\sim \beta^{-1/4}$.

{\bf Claim 1.} For any $k\geq0$, $E_k\geq\theta^n E_{k+1}$.\\
This is because, for any vector $e$,
\[\theta^{-kn}\int_{B_{\theta^k}(0)}|\nabla u_{1,\beta}-\nabla u_{2,\beta}-e|^2
\geq \theta^{-kn}\int_{B_{\theta^{k+1}}(0)}|\nabla
u_{1,\beta}-\nabla u_{2,\beta}-e|^2.\]

Let $\varepsilon_0$ be as in Theorem \ref{thm decay estimate}. Then
 choose $\sigma_0$ and $\tilde{K}_1$ according to Lemma \ref{lem excess small} so that
$2\sigma_0\leq\delta(\varepsilon_0)$ and $\tilde{K}_1\geq
K(\varepsilon_0)$. By Lemma \ref{lem excess small}, we obtain

{\bf Claim 2.} If $\beta^{1/4}\theta^k\geq \tilde{K}_1$, then
$E_k\leq \varepsilon_0^2$.

In the following we take $\tilde{k}_1$ to be the largest $k$
satisfying $\beta^{1/4}\theta^k\geq \tilde{K}_1$. $k_1$ is defined
to be the largest $k\leq \tilde{k}_1$ so that for any $i\leq k$,
$|e_i|\geq 1/2$.

{\bf Claim 3.} For any $1\leq k\leq k_1$, if $E_k\geq
K_2\beta^{-1/8}\theta^{-k/2}$, where $K_2=K_0\theta^{-n}$, then
$E_k\leq \frac{1}{2}E_{k-1}$.

 Let
\[\tilde{u}_{i,\beta}(x):=\theta^{1-k}u_{i,\beta}(\theta^{k-1}x),\]
which satisfies \eqref{equation scaled} with $\beta$ replaced by
$\beta_{k-1}:=\beta\theta^{4k-4}$.

By Claim 2,
\[\varepsilon_{k-1}^2:=\int_{B_1(0)}|\nabla \tilde{u}_{1,\beta}-\nabla
\tilde{u}_{2,\beta}-e_{k-1}|^2=E_{k-1}\leq \varepsilon_0^2.\]
 By Claim 1,
$E_{k-1}\geq K_0\beta_{k-1}^{-1/8}$. Thus
$\beta_{k-1}^{1/8}\varepsilon_{k-1}^2\geq K_0$. Moreover, by
definition we also have $|e_{k-1}|\geq 1/2$. Hence Theorem \ref{thm
decay estimate} applies, which implies the existence of a vector
$\tilde{e}_k$ such that
\[\theta^{-n}\int_{B_\theta(0)}|\nabla \tilde{u}_{1,\beta}-\nabla \tilde{u}_{2,\beta}-\tilde{e}_k|^2
\leq \frac{1}{2}\varepsilon_k^2.\]
 Rescaling back, by the definition of $E_k$, we get {\bf Claim 3}.

Note that in Claim 3, trivially we also have $E_{k-1}\geq E_k$. Thus
we still have
\[E_{k-1}\geq K_2\beta^{-\frac{1}{8}}\theta^{-\frac{k}{2}}\geq
K_2\beta^{-\frac{1}{8}}\theta^{\frac{1-k}{2}}.\] Hence Claim 3 can
be applied repeatedly. From this we deduce the existence of a $k_2$
such that, for any $k\geq k_2$, $E_k\leq
K_2\beta^{-1/8}\theta^{-k/2}$, while for any $k\leq k_2$, $E_k\geq
K_2\beta^{-1/8}\theta^{-k/2}$, and hence by Claim 3,
\[E_k\leq 2^{-1}E_{k-1}\leq \cdots\leq 2^{-k}E_0.\]

It remains to show that $\theta^{k_1}\sim\beta^{-1/4}$. For $k\leq
k_2$, because
\begin{eqnarray*}
\theta^{-kn}\int_{B_{\theta^k}(0)}|e_k-e_{k-1}|^2&\leq&2\theta^{-kn}\int_{B_{\theta^k}(0)}|\nabla
u_{1,\beta}-\nabla
u_{2,\beta}-e_k|^2\\
&&+2\theta^{-kn}\int_{B_{\theta^k}(0)}|\nabla
u_{1,\beta}-\nabla u_{2,\beta}-e_{k-1}|^2\\
&\leq&2E_k+2\theta^{-n}E_{k-1}\\
&\leq&CE_0 2^{-k},
\end{eqnarray*}
we get
\begin{equation}\label{1}
|e_k-e_{k-1}|\leq CE_0^{\frac{1}{2}}2^{-\frac{k}{2}}.
\end{equation}
Similarly, for $k\geq k_2$,
\begin{equation}\label{2}
|e_k-e_{k+1}|\leq
C(n)K_2^{\frac{1}{2}}\beta^{-\frac{1}{16}}\theta^{-\frac{n+k}{4}}.
\end{equation}

Let $k_3$ be the largest number satisfying
\begin{equation}\label{smallest scale}
C(n)K_2^{\frac{1}{2}}\beta^{-\frac{1}{16}}\theta^{-\frac{n}{4}}\frac{\theta^{-\frac{k+1}{4}}}{\theta^{-1/4}-1}\leq\sigma.
\end{equation}
Note that by this choice, there exists a universal constant $C$ such
that
\begin{equation}\label{4}
\frac{1}{C\sigma}\beta^{-\frac{1}{4}}\leq\theta^{k_3}\leq
\frac{C}{\sigma}\beta^{-\frac{1}{4}}.
\end{equation}

Adding \eqref{1} and \eqref{2} from $k=0$ to $k$, we see for any
$k\leq k_3$,
\begin{equation}\label{3}
|e_k-e_0|\leq CE_0^{\frac{1}{2}}+\sigma<1/4.
\end{equation}
In particular, $|e_k|\geq 1/2$ for all $k\leq k_3$. Thus we can
choose $k_1\geq k_3$. By \eqref{4},
\[\theta^{k_1}\leq \frac{C}{\sigma}\beta^{-\frac{1}{4}}.\]

Finally, by choosing $K_1:=\max\{\tilde{K}_1,
\theta^{k_3}\beta^{1/4}\}$ and $r_\beta:=\theta^{k_2}$ we finish the
proof.
\end{proof}

\begin{lem}\label{lem 2.8}
For any $\varepsilon>0$, there exists two constant
$\widetilde{\delta}(\varepsilon)$ and $\tilde{K}(\varepsilon)$ so
that the following holds. Let $u_\beta$ be a solution of
\eqref{equation scaled} in $B_2(0)$ with $\beta\geq
\tilde{K}(\varepsilon)$, satisfying $u_{1,\beta}(0)=u_{2,\beta}(0)$,
\eqref{exponential decay 2} and
\begin{equation}\label{2.1}
\int_{B_2(0)}|\nabla u_{1,\beta}-\nabla u_{2,\beta}-e|^2\leq
\widetilde{\delta}(\varepsilon)
\end{equation}
 for some vector $e$ with $|e|\geq 1/4$. Then
$\{u_{1,\beta}=u_{2,\beta}\}\cap B_1(0)$ belongs to the
$\varepsilon$ neighborhood of $P_e\cap B_1(0)$, where $P_e$ is the
hyperplane orthogonal to $e$.
\end{lem}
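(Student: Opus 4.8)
The plan is to argue by contradiction and compactness, exploiting the uniform Lipschitz bound together with the quantitative closeness to the one-dimensional profile. Suppose the statement fails: there is an $\varepsilon>0$, a sequence $\beta_m\to+\infty$, solutions $u_{\beta_m}$ satisfying $u_{1,\beta_m}(0)=u_{2,\beta_m}(0)$, the exponential decay \eqref{exponential decay 2}, and \eqref{2.1} with $\widetilde\delta=1/m$ for unit-ish vectors $e_m$ (normalized, say $|e_m|=1$ after rescaling by $|e_m|^{-1}$, which is harmless since $|e_m|\geq 1/4$; passing to a subsequence $e_m\to e$), yet there is a point $x_m\in\{u_{1,\beta_m}=u_{2,\beta_m}\}\cap B_1(0)$ with $\mathrm{dist}(x_m,P_{e_m})\geq\varepsilon$. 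First I would record the consequences of \eqref{2.1}: since $\int_{B_2(0)}|\nabla u_{1,\beta_m}-\nabla u_{2,\beta_m}-e_m|^2\to 0$ and $u_{1,\beta_m}(0)-u_{2,\beta_m}(0)=0$, the function $w_{\beta_m}:=u_{1,\beta_m}-u_{2,\beta_m}$ converges in $H^1(B_{3/2}(0))$ to the linear function $x\mapsto e\cdot x$; combined with Lemma \ref{lem 2.4} (so $u_{1,\beta_m}(0)=u_{2,\beta_m}(0)\le C\beta_m^{-1/4}\to 0$) and the uniform Lipschitz bound of Remark \ref{uniform Lip bound}, both $u_{1,\beta_m}$ and $u_{2,\beta_m}$ are uniformly bounded and equicontinuous on $B_{3/2}(0)$, hence converge uniformly (along a subsequence) to a limit $(u_1,u_2)$ with $u_1u_2\equiv 0$ and $u_1-u_2=e\cdot x$ in $B_{3/2}(0)$. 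Since $|e|=1$, this forces $u_1(x)=(e\cdot x)^+$ and $u_2(x)=(e\cdot x)^-$, so $\{u_1=u_2\}=P_e\cap B_{3/2}(0)$.

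The core of the argument is to upgrade the $H^1$/uniform convergence of $w_{\beta_m}$ to uniform closeness of the zero sets. The key point is that $w_{\beta_m}\to e\cdot x$ uniformly on $B_{3/2}(0)$: indeed $w_{\beta_m}$ is harmonic away from the overlap region, but more simply, from \eqref{2.1} and interior estimates for the (uniformly Lipschitz, hence uniformly bounded) differences one gets $\|w_{\beta_m}-e_m\cdot x\|_{L^\infty(B_{5/4}(0))}\to 0$; alternatively, $w_{\beta_m}-e_m\cdot x$ has Laplacian $\beta_m u_{1,\beta_m}\sum_{j\ne1}u_{j,\beta_m}^2-\beta_m u_{2,\beta_m}\sum_{j\ne2}u_{j,\beta_m}^2$, which by Lemma \ref{lem 2.4} and the Lipschitz bound is bounded in $L^\infty$ by $C\beta_m^{1/2}\cdot\beta_m^{-3/4}=C\beta_m^{-1/4}$ on a slightly smaller ball, so elliptic estimates plus the vanishing $L^2$ norm give uniform convergence. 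Now pick $x_m$ as above; after passing to a subsequence $x_m\to x_*\in\overline{B_1(0)}$ with $\mathrm{dist}(x_*,P_e)\ge\varepsilon$, so $|e\cdot x_*|\ge\varepsilon$. But $w_{\beta_m}(x_m)=u_{1,\beta_m}(x_m)-u_{2,\beta_m}(x_m)=0$, while $w_{\beta_m}(x_m)\to e\cdot x_*$ by uniform convergence and continuity; hence $e\cdot x_*=0$, contradicting $|e\cdot x_*|\ge\varepsilon$. This yields the lemma with $\widetilde\delta(\varepsilon)$ and $\tilde K(\varepsilon)$ obtained from the contradiction argument.

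I expect the main (and only genuinely delicate) obstacle to be the uniform—as opposed to merely $H^1$—convergence $w_{\beta_m}\to e\cdot x$, since the contradiction at the end crucially needs pointwise control at the moving point $x_m$. The $H^1$ bound \eqref{2.1} alone is not enough; one must use that $w_{\beta_m}$ solves $\Delta w_{\beta_m}=\beta_m u_{1,\beta_m}u_{2,\beta_m}^2-\beta_m u_{2,\beta_m}u_{1,\beta_m}^2+(\text{exponentially small terms from }j\ne1,2)$, and estimate the right-hand side. On $\{u_{2,\beta_m}>t\}$ one has $\Delta u_{1,\beta_m}\le Ce^{-c\beta_m^{1/4}t}$ (as in step (ii) of the proof of Theorem \ref{thm decay estimate}), and together with Lemma \ref{lem 2.4} bounding $u_{1,\beta_m}=u_{2,\beta_m}$ on the interface by $C\beta_m^{-1/4}$, a layer-cake/De Giorgi–Nash–Moser argument controls $w_{\beta_m}$ in $C^\alpha$ (in fact in $W^{2,p}$ on interior balls) with a bound that, combined with the vanishing $L^2$ norm, forces uniform convergence. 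Once this is in hand the rest is the soft compactness argument sketched above, with the characterization $u_1=(e\cdot x)^+$, $u_2=(e\cdot x)^-$ being immediate from $|e|=1$ and $u_1u_2\equiv 0$.
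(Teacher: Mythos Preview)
Your proof is correct and follows essentially the same contradiction-and-compactness route as the paper: negate the conclusion, extract a convergent subsequence via the uniform Lipschitz bound and Lemma~\ref{lem 2.4}, identify the limit as $u_1=(e\cdot x)^+$, $u_2=(e\cdot x)^-$, and contradict the existence of the bad point $x_m$.

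The only comment is that you make the ``delicate obstacle'' harder than it is. Once you have invoked the uniform Lipschitz bound (Remark~\ref{uniform Lip bound}) together with $u_{1,\beta_m}(0)=u_{2,\beta_m}(0)\le C\beta_m^{-1/4}$, Arzel\`a--Ascoli already gives uniform convergence of $u_{1,\beta_m}$ and $u_{2,\beta_m}$ separately on $\overline{B_1(0)}$; hence $w_{\beta_m}=u_{1,\beta_m}-u_{2,\beta_m}$ converges uniformly to $u_1-u_2$. The identification $u_1-u_2=e\cdot x$ then comes from passing \eqref{2.1} to the limit (lower semicontinuity of the Dirichlet integral, or the strong $H^1_{loc}$ convergence) together with $u_1(0)-u_2(0)=0$. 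There is no need for the layer-cake or $W^{2,p}$ arguments you sketch in your third paragraph; the paper simply cites uniform convergence and the nondegeneracy of $e\cdot x$ to reach the contradiction.
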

\begin{proof}
Assume by the contrary, there exists an $\varepsilon>0$ and a
sequence of solutions $u_\beta$ in $B_2(0)$, with $\beta\to+\infty$,
satisfying $u_{1,\beta}(0)=u_{2,\beta}(0)$, \eqref{exponential decay
2} and
\begin{equation}\label{2.2}
\lim_{\beta\to+\infty}\int_{B_2(0)}|\nabla u_{1,\beta}-\nabla
u_{2,\beta}-e|^2=0,
\end{equation}
where $|e|\geq 1/4$. (At first this vector may depend on $\beta$,
but we can rotate $(u_{i,\beta})$ to make it the same one.) But
there exists $x_\beta\in B_1(0)\cap\{u_{1,\beta}=u_{2,\beta}\}$ such
that
\begin{equation}\label{absurd assump}
\liminf_{\beta\to+\infty}\mbox{dist}(x_\beta, P_e)>0.
\end{equation}
 Hence we can
assume $x_\beta\to x_\infty$, which lies outside $P_e$.

By these assumptions and the uniform Lipschitz regularity of
$u_\beta$, they are uniformly bounded in $\mbox{Lip}_{loc}(B_2(0))$
and can be assumed to converge to a limit $(u_i)$ in
$C_{loc}(B_2(0))$. By \eqref{exponential decay 2}, $u_i\equiv 0$ for
all $i\neq 1,2$. By \eqref{2.2},
\begin{equation}
\int_{B_2(0)}|\nabla u_1-\nabla u_2-e|^2=0.
\end{equation}
Hence by the main result in \cite{TT2011} and \cite{DWZ2011},
$u_1=(e\cdot x)^+$ and $u_2=(e\cdot x)^-$.

Because $u_{i,\beta}\to u_i$ uniformly in $\overline{B_1}$, by the
nondegeneracy of $u_1-u_2$, we obtain a contradiction with
\eqref{absurd assump}.
\end{proof}

Fix an $\varepsilon>0$ and then choose a sufficiently small
$\sigma\leq\tilde{\delta}(\varepsilon)/2$ and a sufficiently large
$K_3\geq \tilde{K}(\varepsilon)$ according to this lemma.
 By Lemma \ref{lem 2.7}, Lemma \ref{lem 2.8} applies to $u_\beta$ in $B_r(x)$ for $r\geq K_3\beta^{-1/4}$
(after scaling to the unit ball), which says
$\{u_{1,\beta}=u_{2,\beta}\}\cap B_r(x)$ belongs to the $\varepsilon
r$ neighborhood of $(x+P_{e(r,x)})\cap B_r(x)$. Since
$|e(r,x)-e_n|\leq 2\sigma$ (for $\beta$ sufficiently large and $e_n$
denotes the $n$-th direction), this implies
$\{u_{1,\beta}=u_{2,\beta}\}\cap B_1(x)\subset
\{|\Pi_{e_n}(y-x)|\leq C\sigma|\Pi_{e_n}^\perp (y-x)|\}$ once
$|y-x|\geq K_3\beta^{1/4}$. Roughly speaking, this is equivalent to
saying that $\{u_{1,\beta}=u_{2,\beta}\}$ is Lipschitz up to the
scale $K_3\beta^{-1/4}$ in the direction $e_n$.

The next result shows that this Lipschitz property also holds for
$r\in(0,K_3\beta^{-1/4})$.
\begin{lem}\label{lem 2.9}
For any $\delta>0$ (sufficiently small) and $L>0$, there exists an
$R(\delta,L)$ so that the following holds. Suppose $(u_i)$ is a
solution of \eqref{equation scaled} with $\beta=1$, in a ball
$B_R(0)$ with $R\geq R(\delta,L)$, satisfying $u_1(0)=u_2(0)$,
\begin{equation}\label{3.1}
\sup_{B_L(0)}\sum_{i\neq 1,2}u_i\leq Ce^{-cR^c},
\end{equation}
 and
 \begin{equation}\label{3.2}
r^{-n}\int_{B_r(0)}|\nabla u_1-\nabla u_2-e|^2\leq\delta, \quad
\forall \ L<r<R,
\end{equation}
where $e$ is a unit vector.
 Then
\begin{equation}\label{3.3}
\sup_{B_L(0)}|\nabla u_1-\nabla u_2-e|\leq c(n)<1.
\end{equation}
Moreover, $\{u_1=u_2\}\cap B_L(0)$ is a Lipschitz graph in the
direction $e$, with its Lipschitz constant bounded by
$\bar{c}(\delta)$, which satisfies
$\lim_{\delta\to0}\bar{c}(\delta)=0$.
\end{lem}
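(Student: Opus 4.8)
The plan is to treat this as a compactness/blow-down argument at scale $\beta=1$, exploiting that the hypothesis \eqref{3.2} controls the Morrey-type excess at \emph{every} scale between $L$ and $R$, not just one. First I would argue by contradiction: suppose the conclusion fails along a sequence of solutions $(u^{(m)})$ on balls $B_{R_m}(0)$ with $R_m\to\infty$, each satisfying $u_1^{(m)}(0)=u_2^{(m)}(0)$, \eqref{3.1} and \eqref{3.2} with a fixed $\delta$ (or with $\delta_m\to0$ for the second, quantitative, part), but violating \eqref{3.3} (resp. having interface Lipschitz constant bounded below). By Remark \ref{uniform Lip bound} the functions $u_i^{(m)}$ are uniformly Lipschitz on compact sets, and by \eqref{3.1} the non-dominating species die out, so after passing to a subsequence $(u^{(m)})$ converges in $C_{loc}(\R^n)$ to a limit $(u_1,u_2,0,\dots)$ which is a global solution of the two-component system $\Delta u_i = u_i u_j^2$ on all of $\R^n$ (this uses that $R_m\to\infty$; the convergence is also strong in $H^1_{loc}$ by \cite{TT2011}).

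Next I would identify the blow-down limit. Passing \eqref{3.2} to the limit gives $r^{-n}\int_{B_r(0)}|\nabla u_1-\nabla u_2-e|^2\le\delta$ for all $r>L$, so $u_1-u_2$ is a global function whose gradient stays within $\sqrt\delta$ (in averaged $L^2$) of the constant vector $e$ at all large scales; combined with the Almgren monotonicity formula (Proposition \ref{Almgren monotonicity formula}) and $u_1(0)=u_2(0)$, the Almgren frequency of the limit at the origin is forced to be exactly $1$, so $u_1-u_2=e\cdot x$ is linear. Then the standard classification of such one-dimensional-profile global solutions (as in \cite{BTWW}, \cite{W}) forces $u_1=(e\cdot x)^+$, $u_2=(e\cdot x)^-$ exactly — in particular $\nabla u_1-\nabla u_2 = e$ a.e. and $\{u_1=u_2\}=P_e$ is a hyperplane. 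By uniform convergence of $u_i^{(m)}\to u_i$ and of the gradients (interior elliptic estimates away from the interface, plus the Lipschitz bound across it), $\sup_{B_L}|\nabla u_1^{(m)}-\nabla u_2^{(m)}-e|\to 0$, which contradicts the assumed failure of \eqref{3.3} for $c(n)$ chosen, say, $=1/2$; this proves the first claim.

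For the quantitative statement on the interface, run the same argument with $\delta=\delta_m\to0$: the limit is again $u_1=(e\cdot x)^+$, $u_2=(e\cdot x)^-$, and by \eqref{3.3} (now available) together with the nondegeneracy $|\nabla(u_1^{(m)}-u_2^{(m)})|\ge 1/2$ on $B_L$, the set $\{u_1^{(m)}=u_2^{(m)}\}\cap B_L$ is the graph of a function $h_m$ in the direction $e$ by the implicit function theorem applied to the nonvanishing-gradient harmonic-like function $w_m:=u_1^{(m)}-u_2^{(m)}$ (note $\Delta w_m = u_1^{(m)}(u_2^{(m)})^2-u_2^{(m)}(u_1^{(m)})^2$, which is $O(\beta^{-1/4})=O(1)$ but irrelevant here since we only need $\nabla w_m\neq0$). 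The Lipschitz constant of $h_m$ is controlled by $\sup_{B_L}|\nabla' w_m|/\inf_{B_L}|\partial_e w_m|$, and since $\nabla w_m\to e$ uniformly and $e$ is a unit vector orthogonal to $P_e$, this ratio tends to $0$; hence for $m$ large it is below any prescribed threshold, giving $\bar c(\delta)\to0$ as $\delta\to0$. The main obstacle I anticipate is making the gradient convergence $\nabla u_i^{(m)}\to\nabla u_i$ uniform \emph{up to and across} the free boundary — away from $\{u_1=u_2\}$ it is immediate from interior estimates, but near the interface one must combine the uniform Lipschitz bound with the strong $H^1_{loc}$ convergence and the fact that the limit profile is genuinely transversal, so that no gradient concentration can occur; this is exactly the kind of estimate that the uniform Lipschitz regularity of \cite{SZ} is designed to supply.
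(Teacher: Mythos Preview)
Your compactness strategy is exactly right and matches the paper, but the identification of the limit is wrong, and this error propagates through the rest of the argument.

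The limit you extract is a global solution of the two–component system $\Delta u_1=u_1u_2^2$, $\Delta u_2=u_2u_1^2$ on $\R^n$ with $\beta=1$. Such solutions are \emph{smooth and strictly positive}; in particular $u_1-u_2$ is \emph{not} harmonic (it satisfies $\Delta(u_1-u_2)=u_1u_2(u_2-u_1)\not\equiv0$), so the Almgren characterization ``frequency $\equiv1$ $\Rightarrow$ linear'' does not apply, and the conclusion $u_1-u_2=e\cdot x$ is false. Likewise the classification in \cite{BTWW}, \cite{W} does \emph{not} yield $u_1=(e\cdot x)^+$, $u_2=(e\cdot x)^-$; what it gives is that a globally Lipschitz entire solution is one–dimensional, i.e.\ $(u_1,u_2)=(g_1(\tilde e\cdot x),g_2(\tilde e\cdot x))$ for the smooth one–dimensional profile $(g_1,g_2)$ and some direction $\tilde e$. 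One then reads off $|\tilde e-e|\le C\delta$ from \eqref{3.2}, and \eqref{3.3} follows because $\sup_t|g_1'(t)-g_2'(t)-1|$ is a universal constant strictly less than $1$ once the scaling is fixed by \eqref{3.2}.

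This also dissolves the ``main obstacle'' you anticipate. Since the limit is a smooth solution of a smooth elliptic system, standard interior estimates upgrade the convergence to $C^2_{loc}(\R^n)$; there is no singular free boundary to worry about, and no need to patch together interior estimates with the Lipschitz bound across an interface. The level set $\{u_1=u_2\}$ of the limit is the smooth hyperplane $\{\tilde e\cdot x=0\}$ with $|\nabla(u_1-u_2)|>0$ there, so the implicit function theorem and $C^2$ convergence immediately give that $\{u_1^{(m)}=u_2^{(m)}\}\cap B_L$ is a graph in the $\tilde e$ (hence $e$) direction with Lipschitz constant tending to $0$. Your confusion seems to come from conflating this lemma, where $\beta=1$ is fixed and $R\to\infty$, with Lemmas~\ref{lem excess small}--\ref{lem 2.8}, where $\beta\to\infty$ and the limit is genuinely segregated.
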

\begin{proof}
Assume by the contrary, there exist $\delta$ and $L$, and a sequence
of solutions $(u_{i,R})$ defined in $B_R(0)$ with $R\to+\infty$,
satisfying \eqref{3.1} and \eqref{3.2}, but the conclusion of this
lemma does not hold.

Because $u_{1,R}(0)=u_{2,R}(0)$, by the Lipschitz bound, there
exists a universal constant $C$ such that
\[u_{1,R}=u_{2,R}(0)\leq C.\]
Combining this with \eqref{3.1} and the uniform Lipschitz bound on
$u_{i,R}$, we see $(u_{i,R})$ are uniformly bounded in
$\mbox{Lip}_{loc}(\R^n)$. Then using standard elliptic estimates and
compactness results, we deduce that $(u_{i,R})$ converges to a limit
$(u_i)$ in $C^2_{loc}(\R^n)$, which is a solution of \eqref{equation
scaled} with $\beta=1$ in $\R^n$.

Passing to the limit in \eqref{3.1} gives $u_i(0)=0$ for all $i\neq
1,2$. Since $u_i\geq0$, by the strong maximum principle, $u_i\equiv
0$ for all $i\neq 1,2$. \eqref{3.2} can also be passed to the limit,
which gives
\begin{equation}\label{3.4}
r^{-n}\int_{B_r(0)}|\nabla u_1-\nabla u_2-e|^2\leq\delta, \quad
\forall \ r>L.
\end{equation}
In particular, because $e$ is nonzero, $(u_1,u_2)\neq 0$ .

It is clear that $(u_1,u_2)$ is a globally Lipschitz solution of the
system
\begin{equation}\label{two component system}
\Delta u_1=u_1u_2^2, \quad \Delta u_2=u_2u_1^2, \quad \mbox{in }
\R^n.
\end{equation}
 Then the main result in \cite{W} says
$(u_1,u_2)=(g_1(\tilde{e}\cdot x), g_2(\tilde{e}\cdot x))$, where
$\tilde{e}$ is a vector and $(g_1,g_2)$ is the one dimensional
solution of \eqref{two component system}. (It is essentially unique,
see \cite{blwz} and \cite{BTWW}.) Substituting this into \eqref{3.4}
we get
\[|\tilde{e}-e|\leq C\delta<1/16,\]
provided $\delta$ has been chosen small enough. (Note that
\eqref{two component system} has a scaling invariance, which however
is fixed by the condition \eqref{3.4}.)

By the implicit function theorem, for all $R$ large,
$\{u_{1,R}=u_{2,R}\}\cap B_L(0)$ is the graph of a smooth function
$h_R$ in the direction of $\tilde{e}$. By the convergence of
$(u_{i,R})$ and the uniform lower bound on $\inf_{B_L(0)}|\nabla
u_{1,R}-\nabla u_{2,R}|$, this function converges to $0$ in a smooth
way. The conclusion then follows.
\end{proof}

Finally, we prove the two corollaries in Section 1.
\begin{proof}[Proof of Corollary \ref{coro 1}]
 Take an arbitrary point $x_0$. Let
$\rho:=\mbox{dist}(x_0,\{u_{1,\beta}=u_{2,\beta}\})$, which we
assume to be attained at $y_0$. Choose a $k$ so that
$\rho\in[\theta^{k+1},\theta^k)$. (Notations as in the proof of
Lemma \ref{lem 2.7}.) Let
\[\tilde{u}_{i,\beta}(x):=\frac{1}{\rho}u_{i,\beta}(y_0+\rho x).\]

If $\rho\leq K_3\beta^{-1/4}$, \eqref{1.1} follows from \eqref{3.3}
in Lemma \ref{lem 2.9}.

If $\rho\geq K_3\beta^{-1/4}$,  \eqref{1.1} follows from
\eqref{Morrey bound 1} or \eqref{Morrey bound 2} in Lemma \ref{lem
2.7} and standard interior elliptic estimates. (Note that in a
neighborhood of $(x_0-y_0)/\rho$ either $\tilde{u}_{1,\beta}$ or
$\tilde{u}_{2,\beta}$ is very small compared to the other
component.)
\end{proof}

The proof of Corollary \ref{coro 2} is similar.

\noindent {\bf Acknowledgments.} The author's research was partially
supported by NSF of China No. 11301522.


\begin{thebibliography}{50}
\small



\bibitem{blwz}
H. Berestycki, T. Lin, J. Wei and C. Zhao, On phase-separation
model: asymptotics and qualitative properties, Arch. Ration. Mech.
Anal. 208 (2013), no.1, 163-200.


\bibitem{BTWW}
{H. Berestycki, S. Terracini, K. Wang and J. Wei}, Existence and
stability of entire solutions of an elliptic system modeling phase
separation, {Adv. Math.} 243 (2013), 102-126.

\bibitem{C-C 3}
L. Caffarelli and A. Cordoba, Phase transitions: Uniform regularity
of the intermediate layers, Journal fur die reine und angewandte
Mathematik (Crelles Journal) 593 (2006), 209-235.


\bibitem{DWZ2011}
E. N. Dancer, K. Wang and Z. Zhang, The limit equation for the
Gross-Pitaevskii equations and S. Terracini's conjecture, J. Funct.
Anal., 262 (2012), no. 2, 1087-1131.



\bibitem{SZ}
N. Soave and A. Zilio, Uniform bounds for strongly competing
systems: the optimal Lipschitz case, to appear in Archive for
Rational Mechanics and Analysis.

\bibitem{SZ 2}
N. Soave and A. Zilio, On phase separation in systems of coupled
elliptic equations: asymptotic analysis and geometric aspects,
arXiv:1506.07779.



\bibitem{TT2011}
{H. Tavares and S. Terracini}, {Regularity of the nodal set of
segregated critical configurations under a weak reflection law},
Calculus of Variations and PDEs 45 (2012), no. 3-4, 273-317.

\bibitem{W}
K. Wang, On the De Giorgi type conjecture for an elliptic system
modeling phase separation, Comm. PDE 39 (2014), no. 4, 696-739.

\bibitem{W2}
K. Wang, Harmonic apparoximation and improvement of flatness in a
singularly perturbation problem, Manuscripta Mathematica 146 (2015),
no. 1-2, 281-298.

\bibitem{W3}
K. Wang, A new proof of Savin's theorem on Allen-Cahn equations,
arXiv:1401.6480.

\end{thebibliography}
\end{document}